\def\BMO{{\mathrm{BMO}}}
\def\loc{{\mathrm{loc}}}
\numberwithin{equation}{section} 
 \theoremstyle{empty}
 \newtheorem{refproof}{Proof}  
\def\corrauth{\footnote{Corresponding author.
 }
 \stepcounter{footnote}
 }
\def\affilnum#1{${}^{#1}$}
\def\affil#1{${}^{#1}$}
\def\comma{$^{\textrm{,}}$}
\theoremstyle{definition} 
\newtheorem{theorem}{\indent
                  Theorem}[section]
    \newtheorem{lemma}{\indent  Lemma} [section]
\newtheorem{corollary}{\indent Corollary} [section]
    \newtheorem{definition}{\indent  Definition} [section]
\theoremstyle{nonumberplain} 
\newtheorem{proof}{\indent Proof}
\renewcommand*{\author}[2][?]{
     \gdef\shortauthor{?}
     \gdef\@author{#2}
   \ifthenelse{\equal{#1}{?}}
     { \gdef\shortauthor{\let\comma=\empty \let\corrauth=\empty \renewcommand{\affil}[1]{} #2} }
     { \gdef\shortauthor{#1}}
}
\def\@setauthor{\begin{center}
  \sc  \@author
    \end{center}%
}
\title{\bf\Large
Characterization of BMO spaces via commutators of some maximal operators on the slice spaces }
\author[J. WU and W. Zhang] { Yunpeng Chang\affil{1}, Jianglong Wu\affil{1}\comma\corrauth, Yida Sun\affil{1}
\\ 
{\affilnum{1}\footnotesize\it Department of Mathematics, Mudanjiang Normal University, Mudanjiang, 157011, China}
}
\date{} 
\begin{document}

\maketitle


\footnote{\textit{Email address}:\  {\bf jl-wu@163.com}\\
$~~^{~~~~}$Chang and Wu have contributed equally to this work and should be considered co-first authors.}  
\vspace{-4em}
\renewcommand\abstractname{}
\begin{abstract}
\setlength{\parindent}{0pt}\setlength{\parskip}{1.5explus0.5exminus0.2ex}%
\noindent
{\textbf{Abstract}:} In this paper, the main aim  is to demonstrate the boundedness for commutators of (fractional) maximal function and sharp maximal function in the slice spaces, where the symbols of the commutators belong to the $\BMO$ space, whereby some new characterizations for $\BMO$ spaces are given.

\smallskip
 {\bf Keywords:}  \  fractional maximal function; sharp function; commutator;  $\BMO$ space; slice space.

 { \bf AMS(2020) Subject Classification:}  \ 26A33, 42B20, 42B25\ \
\end{abstract}

\section{Introduction and main results}
\noindent Let $T$ be the classic singular integral operator, the Coifman-Rochberg-Weiss type commutator $[b,T]$ generated by $T$ and a suitable function $b$ is defined by
$$[b,T]f=bT(f)-T(bf).$$

An important conclusion is that $b\in \BMO(\mathbb{R}^{n})$ if and only if $[b,T]$ is bounded on $L^{s}(\mathbb{R}^{n})$ for $1<s<\infty,$ we refer to see Coifman, Rochberg and Weiss \cite{coifman1976factorization}(see also \cite{janson1978mean}).

It is worth mentioning that the maximal operator is the most basic and important class of operators in harmonic analysis. Up to now, this class of operators has not only been studied by many people and extended to different fields, such as Lie group theory, number theory etc \cite{wu2023some,wu2024characterization,wu2024some}, but also applied to mathematical physics, partial differential equations and other branches\cite{tao2000electro,nolder1991hardy}. Moreover, we first recall some operators (commutators can also be understood as an operator).

Let $0<\alpha<n,$ the fractional maximal operator can be defined by
 \begin{align*}
 M_{\alpha}(f)(x)=\sup_{Q\ni x}\frac{1}{|Q|^{1-\frac{\alpha}{n}}}\int_{Q}|f(y)|dy,
 \end{align*}
 where the supremum is taken over all cubes $Q\subset \mathbb{R}^{n}.$ For $\alpha=0,$~$M_{0}=M$(classic Hardy-Littlewood maximal operator).

 If $b\in L_{\loc}^{1}(\mathbb{R}^{n})$, the commutator produced by $b$~with $M_{\alpha}$ is given by
  \begin{align*}
  M_{\alpha,b}(f)(x)=\sup_{Q\ni x}\frac{1}{|Q|^{1-\frac{\alpha}{n}}}\int_{Q}|b(x)-b(y)||f(y)|dy,
  \end{align*}
  where the supremum is taken over all cubes $Q\subset \mathbb{R}^{n}.$  For $\alpha=0,$~$M_{0,b}=M_{b}$(maximal commutator).~And the nonlinear commutators of $M_{\alpha}$ can be defined by
\begin{align*}
[b,M_{\alpha}]f(x)=b(x)M_{\alpha}(f)(x)-M_{\alpha}(bf)(x).
\end{align*}
For $\alpha=0,$ we write $[b,M]=[b,M_{0}].$

Fefferman and Stein \cite{fefferman1972h} introduced the sharp maximal function, defined as
 \begin{align*}
 M^{\sharp}(f)(x)=\sup_{Q\ni x}\frac{1}{|Q|}\int_{Q}|f(y)-f_{Q}|dy,
 \end{align*}
 where the supremum is taken over all cubes $Q\subset \mathbb{R}^{n},$~$f_{Q}$ is the average of $f$ over $Q.$~And the nonlinear commutator produced by $b$~with $ M^{\sharp}$ is provided by
\begin{align*}
[b,M^{\sharp}]f(x)=b(x)M^{\sharp}(f)(x)-M^{\sharp}(bf)(x).
\end{align*}

Noting that $M_{\alpha,b}$ and $[b,M_{\alpha}]$ essentially differ from each other. For instant, $M_{\alpha,b}$ is positive and sublinear, however, $[b,M_{\alpha}]([b,M^{\sharp}])$ is neither positive nor sublinear.

 When $b\in \BMO(\mathbb{R}^{n}),$ Bastero et al. \cite{bastero2000commutators} studied the boundedness of $[b,M]$ in $L^{q}(\mathbb{R}^{n})$ for $1<q<\infty.$ In \cite{zhang2009commutators,zhang2014commutators,zhang2014commutators1} by Zhang and Wu further obtained the boundedness for $M_{\alpha,b},~[b,M_{\alpha}],[b,M^{\sharp}]$ on variable Lebesgue spaces. Guliyev \cite{guliyev2022some} obtained the boundedness for  $M_{\alpha,b},~[b,M_{\alpha}]$ in the Orlicz space on any stratified Lie group. However,~the study of operators on slice spaces seems to be very little.

Inspired by the above literatures, the purpose of this paper is to study the the boundedness for commutators of fractional maximal function and sharp maximal function in the slice spaces, where the symbols of the commutators belong to the $\BMO$ spaces, as a corollary, we also obtain the boundedness of maximal commutator in the slice spaces.


Let $\alpha \geq 0,$ for a fixed cube $Q_{*},$ the fractional maximal function with respect to $Q_{*}$ of locally integrable function $f$ is given by
\begin{align*}
M_{\alpha,Q_{*}}(f)(x)=\sup_{\stackrel{Q\ni x}{Q\subset Q_{*}}}\frac{1}{|Q|^{1-\frac{\alpha}{n}}}\int_{Q}|f(y)|dy,
\end{align*}
where the supremum is taken over all cubes $Q$ with $Q\subset Q_{*}$. If $\alpha=0, M_{Q_{*}}=M_{0,Q_{*}}.$

In order to introduce the following theorem, the $b^{-}$ is defined by
\begin{align*}
b^{-}(x)
= \left\{\begin{matrix}
0,\ &\mathrm{if}\ b(x)\geq 0,\\
|b(x)|,\ &\mathrm{if}\ b(x)<0,\\
\end{matrix}\right.
~~\mathrm{and}~~ b^{+}(x)=|b(x)|-b^{-}(x).
\end{align*}

First of all, we consider the boundedness of nonlinear commutators generated by the fractional maximal function and BMO function on slice spaces, and establish some new equivalent characterizations of BMO spaces which are different from Morrey spaces, (variable) Lebesgue spaces and Orlicz spaces.

\begin{theorem}\label{thm.1-0}Let $0<\alpha<n$ and $b$~be~a locally integrable function on $\mathbb{R}^{n}.$ If $1<p<r<\infty,~1<q<s<\infty$~and $\alpha/n=1/p-1/r=1/q-1/s,$~then the following statements are equivalent.

(T.1) $b\in \mathrm{BMO}(\mathbb{R}^{n})$ and $b^{-}\in L^{\infty}(\mathbb{R}^{n}).$

(T.2) $[b,M_{\alpha}]$ is bounded from $(E_{p}^{q})_{t}(\mathbb{R}^{n})~\mathrm{to}~(E_{r}^{s})_{t}(\mathbb{R}^{n}).$

(T.3) There exists a constant $C>0$ such that
\begin{align*}
\sup_{Q}\frac{1}{|Q|^{1/s}}\left\|(b-|Q|^{-\alpha/n}M_{\alpha,Q}(b))\chi_{Q}\right\|_{(E_{r}^{s})_{t}(\mathbb{R}^{n})}\leq C.
\end{align*}

(T.4) There exists a constant $C>0$ such that
\begin{align*}
\sup_{Q}\frac{1}{|Q|}\int_{Q}|b(y)-M_{Q}(b)(y)|dy\leq C.
\end{align*}
\end{theorem}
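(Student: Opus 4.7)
My plan is to establish the four conditions by the cyclic chain $(T.1)\Rightarrow(T.2)\Rightarrow(T.3)\Rightarrow(T.4)\Rightarrow(T.1)$. For $(T.1)\Rightarrow(T.2)$, I would invoke the standard pointwise control
$$|[b,M_\alpha]f(x)| \leq M_{\alpha,b}(f)(x) + 2\|b^{-}\|_{L^{\infty}}\,M_\alpha(f)(x),$$
which follows from the splitting $b=b^{+}-b^{-}$ and holds precisely when $b^{-}\in L^{\infty}$. The target boundedness then reduces to two separate mapping properties on slice spaces: the fractional maximal function $M_\alpha\colon (E_p^q)_t\to(E_r^s)_t$ (obtainable by adapting the classical $L^{p}\to L^{r}$ bound to slice spaces, or by extrapolation from it), and the maximal commutator $M_{\alpha,b}\colon (E_p^q)_t\to(E_r^s)_t$ when $b\in\BMO$ (obtainable from the boundedness of $M_\alpha$ combined with a John--Nirenberg estimate to absorb the $\BMO$ oscillation).

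For $(T.2)\Rightarrow(T.3)$, I would test $[b,M_\alpha]$ on $f=\chi_Q$ for an arbitrary cube $Q$. A direct pointwise computation gives $M_\alpha(\chi_Q)(x)=|Q|^{\alpha/n}$ and $M_\alpha(b\chi_Q)(x)=M_{\alpha,Q}(|b|)(x)$ for $x\in Q$; after first treating the $b\ge 0$ case and then absorbing the (already bounded) negative part, this yields
$$[b,M_\alpha]\chi_Q(x)\,\chi_Q(x) = |Q|^{\alpha/n}\bigl(b(x) - |Q|^{-\alpha/n}M_{\alpha,Q}(b)(x)\bigr)\chi_Q(x).$$
Taking the $(E_r^s)_t$-norm, using the assumed boundedness $(E_p^q)_t\to(E_r^s)_t$ of $[b,M_\alpha]$, and matching the slice-space scaling of $\|\chi_Q\|_{(E_p^q)_t}$ against the index relation $\alpha/n = 1/p - 1/r = 1/q - 1/s$ then recovers (T.3) with the normalization $|Q|^{1/s}$ in the denominator.

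For $(T.3)\Rightarrow(T.4)$, I would pair via H\"older's inequality on slice spaces with $\chi_Q\in ((E_r^s)_t)'$ to convert (T.3) into the integral bound
$$\frac{1}{|Q|}\int_Q \bigl|b(y) - |Q|^{-\alpha/n}M_{\alpha,Q}(b)(y)\bigr|\,dy \leq C,$$
and then bridge to (T.4) via the pointwise inequality $|Q|^{-\alpha/n}M_{\alpha,Q}(b)\leq M_Q(b)$ on $Q$ together with a Lebesgue-differentiation step (recovering $|b|$ from $M_Q(b)$) to control the positive difference $M_Q(b)-|Q|^{-\alpha/n}M_{\alpha,Q}(b)$ in integral mean. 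Finally, $(T.4)\Rightarrow(T.1)$ is the classical Bastero--Milman--Ruiz equivalence, also used by Zhang and Wu on variable Lebesgue spaces: the uniform estimate on $\frac{1}{|Q|}\int_Q|b-M_Q(b)|$ extracts the $\BMO$ oscillation of $b$ on $\{b\ge 0\}$ and simultaneously forces $b^{-}\in L^{\infty}$ from the contribution of $\{b<0\}$.

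The main obstacle I expect is the step $(T.3)\Rightarrow(T.4)$: controlling the gap $M_Q(b)-|Q|^{-\alpha/n}M_{\alpha,Q}(b)$ in $L^{1}$-mean is not immediate, since the two quantities differ by a fractional-order scaling factor, and requires careful use of both Lebesgue differentiation and triangle-type inequalities. A secondary difficulty is the precise identification of $\|\chi_Q\|_{(E_p^q)_t}$ and of its dual norm, which in slice spaces depends on the scale parameter $t$ and must be matched against the index constraints in order for all the implications to close cleanly.
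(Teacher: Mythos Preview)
Your cyclic scheme and the arguments for $(T.1)\Rightarrow(T.2)$, $(T.2)\Rightarrow(T.3)$, and $(T.4)\Rightarrow(T.1)$ are essentially the paper's; two minor remarks there. In $(T.2)\Rightarrow(T.3)$ no sign splitting is needed: for every locally integrable $b$ and every $y\in Q$ the identity $[b,M_\alpha](\chi_Q)(y)=|Q|^{\alpha/n}\bigl(b(y)-|Q|^{-\alpha/n}M_{\alpha,Q}(b)(y)\bigr)$ holds verbatim, since $M_\alpha(b\chi_Q)=M_{\alpha,Q}(b)$ already carries the absolute value. Invoking ``the already bounded negative part'' is a circularity, because at this stage $b^-\in L^\infty$ is not yet known. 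Also, the scaling of $\|\chi_Q\|_{(E_p^q)_t}$ is not an obstacle: one has $\|\chi_Q\|_{(E_p^q)_t}\approx|Q|^{1/q}$ uniformly in $t$, and the dual pairing with $(E_{r'}^{s'})_t$ closes with $|Q|^{1/s}\cdot|Q|^{1/s'}=|Q|$.

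The real gap is $(T.3)\Rightarrow(T.4)$. After H\"older gives $\frac{1}{|Q|}\int_Q\bigl|b-|Q|^{-\alpha/n}M_{\alpha,Q}(b)\bigr|\le C$, you still have to control $\frac{1}{|Q|}\int_Q\bigl(M_Q(b)-|Q|^{-\alpha/n}M_{\alpha,Q}(b)\bigr)$, and your proposed ``Lebesgue differentiation step'' does not do this: Lebesgue differentiation only yields $M_Q(b)\ge|b|$, which goes the wrong way, and there is no a priori $L^1$ control on $M_Q(b)$ over $Q$ for a merely locally integrable $b$. The paper supplies the missing idea in two moves. First, (T.3) alone forces $b\in\BMO$: on $E=\{y\in Q:\,b(y)\le b_Q\}$ one has $b(y)\le b_Q\le|b_Q|\le|Q|^{-\alpha/n}M_{\alpha,Q}(b)(y)$, hence $|b(y)-b_Q|\le|b(y)-|Q|^{-\alpha/n}M_{\alpha,Q}(b)(y)|$, and since $\int_E|b-b_Q|=\int_{Q\setminus E}|b-b_Q|$ this already bounds the full oscillation. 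Second, knowing $|b|\in\BMO$, one rewrites the gap on $Q$ as
\[
\bigl|\,|Q|^{-\alpha/n}M_{\alpha,Q}(b)-M_Q(b)\,\bigr|\le|Q|^{-\alpha/n}\bigl|[|b|,M_\alpha](\chi_Q)\bigr|+\bigl|[|b|,M](\chi_Q)\bigr|,
\]
and then the pointwise maximal-commutator bound $M_{\alpha,|b|}(\chi_Q)\le C\|b\|_\ast\bigl(M(M_\alpha\chi_Q)+M_\alpha(M\chi_Q)\bigr)\le C\|b\|_\ast|Q|^{\alpha/n}$ (and its $\alpha=0$ analogue) closes the estimate. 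Without first extracting $b\in\BMO$ from (T.3), the implication does not go through.
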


When we take $\alpha=0$ and make a slight modification of the above theorem, it is not difficult to obtain the boundedness of nonlinear commutators generated by Hardy-Littlewood maximal functions and BMO functions in slice spaces, thus establishing a new equivalent characterization different from Theorem 1.1.~In fact, the proof is almost similar to the theorem mentioned above. For this we present the conclusion below and it is even new.

\begin{corollary}
Let $b$~be~a locally integrable function on $\mathbb{R}^{n}.$ If $1<p<\infty,~1<q<\infty$,~then the following statements are equivalent.

(C.1) $b\in \mathrm{BMO}(\mathbb{R}^{n})$ and $b^{-}\in L^{\infty}(\mathbb{R}^{n}).$

(C.2) $[b,M]$ is bounded on $(E_{p}^{q})_{t}(\mathbb{R}^{n}).$

(C.3) There exists a constant $C>0$ such that
\begin{align*}
\sup_{Q}\frac{1}{|Q|}\left\|(b-M_{Q}(b))\chi_{Q}\right\|^{q}_{(E_{p}^{q})_{t}(\mathbb{R}^{n})}\leq C.
\end{align*}

(C.4) There exists a constant $C>0$ such that
\begin{align*}
\sup_{Q}\frac{1}{|Q|}\int_{Q}|b(y)-M_{Q}(b)(y)|dy\leq C.
\end{align*}
\end{corollary}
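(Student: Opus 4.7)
My plan is to prove the four equivalent statements by establishing the cyclic chain (C.1)$\Rightarrow$(C.2)$\Rightarrow$(C.3)$\Rightarrow$(C.4)$\Rightarrow$(C.1), exactly parallel to the strategy for \cref{thm.1-0} but specialized to $\alpha=0$, so that the source and target slice spaces coincide. The auxiliary facts I expect to invoke throughout are the boundedness of the Hardy--Littlewood maximal operator $M$ on $(E_p^q)_t(\mathbb{R}^n)$, the slice-space norm estimate $\|\chi_Q\|_{(E_p^q)_t(\mathbb{R}^n)}\approx |Q|^{1/q}$, and a Hölder-type inequality on slice spaces; all of these are stated in the slice-space literature cited in the theorem's proof.

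For (C.1)$\Rightarrow$(C.2), I would use the pointwise decomposition of Bastero, Milman and Ruiz \cite{bastero2000commutators}, namely
\[
\bigl|[b,M]f(x)\bigr|\le \bigl|b(x)-b_Q\bigr|\,Mf(x)+M\bigl((b-b_Q)f\bigr)(x)+2\,\|b^-\|_{L^\infty}\,Mf(x),
\]
together with the John--Nirenberg inequality and the boundedness of $M$ on $(E_p^q)_t(\mathbb{R}^n)$. For (C.2)$\Rightarrow$(C.3), the key observation is that for $x\in Q$ one has $M(\chi_Q)(x)=1$ and $M(b\chi_Q)(x)=M_Q(b)(x)$, so that $[b,M](\chi_Q)(x)=b(x)-M_Q(b)(x)$ on $Q$. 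Testing (C.2) against $\chi_Q$ and using $\|\chi_Q\|_{(E_p^q)_t(\mathbb{R}^n)}\approx |Q|^{1/q}$ gives (C.3) after rearrangement. The step (C.3)$\Rightarrow$(C.4) follows from the Hölder inequality on slice spaces: bounding
\[
\frac{1}{|Q|}\int_Q |b(y)-M_Q(b)(y)|\,dy\lesssim \frac{1}{|Q|}\,\bigl\|(b-M_Q(b))\chi_Q\bigr\|_{(E_p^q)_t(\mathbb{R}^n)}\,\|\chi_Q\|_{(E_{p'}^{q'})_t(\mathbb{R}^n)},
\]
and inserting the cube-indicator norm asymptotics cancels the volume factor.

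The main obstacle is (C.4)$\Rightarrow$(C.1), which is the only implication that genuinely recovers BMO from an oscillation against $M_Q(b)$. I would follow the Bastero--Milman--Ruiz scheme: first deduce $b^-\in L^\infty(\mathbb{R}^n)$ by exploiting that on the set $\{x\in Q:b(x)<0\}$ one has $M_Q(b)(x)\ge 0$, so that $|b(y)-M_Q(b)(y)|\ge b^-(y)$, and a careful cube selection combined with (C.4) forces $b^-$ to be essentially bounded; then obtain the $\BMO$ bound via the splitting
\[
\frac{1}{|Q|}\int_Q |b(y)-b_Q|\,dy\le \frac{1}{|Q|}\int_Q |b(y)-M_Q(b)(y)|\,dy+\frac{1}{|Q|}\int_Q |M_Q(b)(y)-b_Q|\,dy,
\]
where the first term is controlled by (C.4) directly and the second is handled through the weak-$(1,1)$ boundedness of $M$ applied to $b-b_Q$ on $Q$, together with the already-established $L^\infty$ bound on $b^-$. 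Passing from the oscillation of $b$ against $M_Q(b)$ to the mean oscillation against $b_Q$ without circularly assuming BMO is the delicate point; the positivity information encoded in $b^-\in L^\infty$ is exactly what resolves this tension.
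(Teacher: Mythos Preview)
Your cyclic scheme and the steps (C.2)$\Rightarrow$(C.3) and (C.3)$\Rightarrow$(C.4) are exactly what the paper does (the paper does not prove the corollary separately; it says to take $\alpha=0$ in \cref{thm.1-0}). Two places in your sketch need repair.

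\medskip

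\textbf{(C.1)$\Rightarrow$(C.2).} The ``pointwise decomposition'' you write down,
\[
\bigl|[b,M]f(x)\bigr|\le |b(x)-b_Q|\,Mf(x)+M\bigl((b-b_Q)f\bigr)(x)+2\|b^-\|_{L^\infty}Mf(x),
\]
is not well posed: there is no distinguished cube $Q$ attached to a point $x$ once the supremum defining $M$ has been taken, so this inequality has no meaning as stated and John--Nirenberg cannot be applied to the first term. The paper instead uses the genuine pointwise bound
\[
\bigl|[b,M]f(x)\bigr|\le M_{b}f(x)+2b^-(x)\,Mf(x),
\]
and then controls the maximal commutator via $M_{b}f\lesssim \|b\|_*\,M(Mf)$ (the $\alpha=0$ case of Lemma~2.5), after which boundedness of $M$ on $(E_p^q)_t$ finishes the job. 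This is a cleaner route and avoids any cube-selection ambiguity.

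\medskip

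\textbf{(C.4)$\Rightarrow$(C.1).} Your plan for the BMO half does not work. After splitting
\[
\frac{1}{|Q|}\int_Q|b-b_Q|\le \frac{1}{|Q|}\int_Q|b-M_Q(b)|+\frac{1}{|Q|}\int_Q|M_Q(b)-b_Q|,
\]
you propose to control the second integral by the weak-$(1,1)$ bound for $M$ applied to $b-b_Q$. But weak-$(1,1)$ gives no $L^1$ control: one can only reduce $\int_Q |M_Q(b)-b_Q|$ to $\int_Q M_Q(|b-b_Q|)$ plus $2\|b^-\|_{L^\infty}|Q|$, and $\frac{1}{|Q|}\int_Q M_Q(|b-b_Q|)$ is \emph{not} dominated by $\frac{1}{|Q|}\int_Q|b-b_Q|$ (the maximal function is unbounded on $L^1$), so the argument is circular. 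The paper simply invokes Lemma~2.7 (Bastero--Milman--Ruiz) for this implication; the actual mechanism there is different from yours: one uses that $|b_Q|\le M_Q(b)(y)$ for every $y\in Q$, so on $E=\{y\in Q:b(y)\le b_Q\}$ one has $|b(y)-b_Q|\le|b(y)-M_Q(b)(y)|$, and then the symmetry $\int_E|b-b_Q|=\int_{Q\setminus E}|b-b_Q|$ gives
\[
\frac{1}{|Q|}\int_Q|b-b_Q|=\frac{2}{|Q|}\int_E|b-b_Q|\le\frac{2}{|Q|}\int_Q|b-M_Q(b)|\le C
\]
directly from (C.4). Replacing your weak-$(1,1)$ step by this $E$--$F$ symmetry argument closes the gap.
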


Secondly,~we obtain the boundedness of commutators generated by the fractional maximal function and BMO function on slice spaces, and establish some new equivalent characterizations of BMO spaces as follows.

\begin{theorem}\label{thm.2-0}
Let $0<\alpha<n$ and $b$~be~a locally integrable function on $\mathbb{R}^{n}.$ If $1<p<r<\infty,~1<q<s<\infty$~and $\alpha/n=1/p-1/r=1/q-1/s,$~then the following statements are equivalent.

(T.1) $b\in \mathrm{BMO}(\mathbb{R}^{n})$.

(T.2) $M_{\alpha,b}$ is bounded from $(E_{p}^{q})_{t}(\mathbb{R}^{n})~\mathrm{to}~(E_{r}^{s})_{t}(\mathbb{R}^{n}).$

(T.3) There exists a constant $C>0$ such that
\begin{align*}
\sup_{Q}\frac{1}{|Q|^{1/s}}\left\|(b-b_{Q})\chi_{Q}\right\|_{(E_{r}^{s})_{t}(\mathbb{R}^{n})}\leq C.
\end{align*}

(T.4) There exists a constant $C>0$ such that
\begin{align*}
\sup_{Q}\frac{1}{|Q|}\int_{Q}|b(y)-b_{Q}|dy\leq C.
\end{align*}
\end{theorem}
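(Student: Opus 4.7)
The plan is to establish the four-way equivalence by the cyclic chain (T.1) $\Rightarrow$ (T.2) $\Rightarrow$ (T.3) $\Rightarrow$ (T.4) $\Rightarrow$ (T.1). The last implication is immediate, since (T.4) is literally the statement that $b$ has finite $\BMO$ norm. The intermediate implications rest on two slice-space ingredients that I take as known from the prerequisite literature on $(E_p^q)_t(\mathbb{R}^{n})$: a Hölder-type inequality $\int fg\,dx \leq \|f\|_{(E_p^q)_t}\|g\|_{(E_{p'}^{q'})_t}$, and the characteristic-function norm estimate for $\|\chi_Q\|_{(E_p^q)_t}$ (which involves both $|Q|$ and $t$, and ultimately yields the expected scaling once the regimes $|Q|^{1/n}\leq t$ and $|Q|^{1/n}>t$ are treated separately).

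For the analytical step (T.1) $\Rightarrow$ (T.2), the idea is to majorize $M_{\alpha,b}(f)$ pointwise by a fractional maximal operator with adjusted parameters. Starting from $|b(x)-b(y)| \leq |b(x)-b_Q| + |b(y)-b_Q|$ inside the defining supremum and applying Hölder's inequality with an auxiliary exponent $\rho > 1$ chosen so that $\alpha\rho < n$ and $\rho < \min\{p,q\}$, the John--Nirenberg inequality produces a pointwise bound of the form
\begin{align*}
M_{\alpha,b}(f)(x) \leq C\|b\|_{\BMO}\bigl(M_{\alpha\rho}(|f|^{\rho})(x)\bigr)^{1/\rho}.
\end{align*}
Since the exponent conditions $\alpha/n = 1/p-1/r = 1/q-1/s$ rescale to $\alpha\rho/n = \rho/p - \rho/r = \rho/q - \rho/s$, the desired $(E_p^q)_t \to (E_r^s)_t$ bound then follows by applying the slice-space mapping property of the fractional maximal operator to $|f|^{\rho}$ (with exponents divided by $\rho$) and taking a $\rho$-th root.

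For (T.2) $\Rightarrow$ (T.3), I test the hypothesis on $f = \chi_Q$ and restrict the defining supremum of $M_{\alpha,b}(\chi_Q)(x)$ to $Q$ itself. For $x \in Q$ this yields
\begin{align*}
M_{\alpha,b}(\chi_Q)(x) \geq \frac{1}{|Q|^{1-\alpha/n}}\int_Q |b(x)-b(y)|\,dy \geq |Q|^{\alpha/n}|b(x)-b_Q|,
\end{align*}
so that $(b-b_Q)\chi_Q \leq |Q|^{-\alpha/n} M_{\alpha,b}(\chi_Q)$ pointwise on $Q$. Passing to the $(E_r^s)_t$ norm, inserting the hypothesis (T.2), and using the slice-space estimate for $\|\chi_Q\|_{(E_p^q)_t}$ produces (T.3) after the bookkeeping $-\alpha/n + 1/q = 1/s$. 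For (T.3) $\Rightarrow$ (T.4), one application of the slice-space Hölder inequality with dual exponents $(r',s')$, together with the corresponding characteristic-function estimate, gives
\begin{align*}
\int_Q |b(y)-b_Q|\,dy \leq \|(b-b_Q)\chi_Q\|_{(E_r^s)_t}\,\|\chi_Q\|_{(E_{r'}^{s'})_t} \leq C|Q|,
\end{align*}
which is (T.4).

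The principal obstacle is the step (T.1) $\Rightarrow$ (T.2): selecting the auxiliary exponent $\rho$, carrying out the John--Nirenberg--Hölder pointwise reduction, and invoking the slice-space boundedness of the fractional maximal operator under the scaling $\alpha/n = 1/p-1/r = 1/q-1/s$, which is the only place where this scaling is actually consumed. A secondary technicality is that $\|\chi_Q\|_{(E_p^q)_t}$ behaves differently according as $|Q|^{1/n}$ is smaller or larger than $t$, so the testing implications (T.2) $\Rightarrow$ (T.3) and (T.3) $\Rightarrow$ (T.4) must be checked in both regimes to confirm that the constants are uniform in $Q$.
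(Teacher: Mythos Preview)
Your proof is correct and follows the same cyclic scheme (T.1) $\Rightarrow$ (T.2) $\Rightarrow$ (T.3) $\Rightarrow$ (T.4) $\Rightarrow$ (T.1) as the paper. The implications (T.2) $\Rightarrow$ (T.3), (T.3) $\Rightarrow$ (T.4), and (T.4) $\Rightarrow$ (T.1) are handled essentially identically in both arguments: test on $\chi_Q$, use the characteristic-function norm estimate, and apply H\"older.

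The only genuine difference is in (T.1) $\Rightarrow$ (T.2). The paper quotes a ready-made pointwise inequality (its Lemma~2.5, due to Guliyev)
\[
M_{\alpha,b}(f)(x) \leq C\|b\|_{*}\bigl(M(M_\alpha f)(x) + M_\alpha(Mf)(x)\bigr),
\]
after which the separate slice-space boundedness of $M$ and of $M_\alpha$ (Lemmas~2.2 and~2.4) finishes the step with no auxiliary exponent. Your John--Nirenberg route via the pointwise majorant $(M_{\alpha\rho}(|f|^\rho))^{1/\rho}$ is equally valid and more self-contained, at the cost of tracking the rescaled exponents $p/\rho,\,q/\rho,\,r/\rho,\,s/\rho$ and using the power identity $\|g^{1/\rho}\|_{(E_r^s)_t} = \|g\|_{(E_{r/\rho}^{s/\rho})_t}^{1/\rho}$, which does hold for the slice norm by direct computation. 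The paper's version is slightly shorter; yours avoids importing an external pointwise lemma.

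One remark: your worry about treating the regimes $|Q|^{1/n}\leq t$ and $|Q|^{1/n}>t$ separately is unnecessary here. The paper records (Lemma~2.3) that $\|\chi_Q\|_{(E_r^p)_t(\mathbb{R}^n)} \approx |Q|^{1/p}$ uniformly in $Q$ and $t$, so no case-splitting is needed in the testing steps.
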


In analogy with corollary 1.1, for the case of $\alpha=0$ and also make a slight modification of the above theorem, it is easy to get the boundedness of commutators generated by Hardy-Littlewood maximal functions and BMO functions in slice spaces, by which a new equivalent characterization different from Theorem 1.2 is obtained.~Of course, the proof is almost similar to the theorem mentioned above. For this we present the conclusion below and it is even new.

\begin{corollary}
Let $b$~be~a locally integrable function on $\mathbb{R}^{n}.$ If $1<p<\infty,~1<q<\infty$,~then the following statements are equivalent.

(C.1) $b\in \mathrm{BMO}(\mathbb{R}^{n}).$

(C.2) $M_{b}$ is bounded on $(E_{p}^{q})_{t}(\mathbb{R}^{n}).$

(C.3) There exists a constant $C>0$ such that
\begin{align*}
\sup_{Q}\frac{1}{|Q|}\left\|(b-b_{Q})\chi_{Q}\right\|^{q}_{(E_{p}^{q})_{t}(\mathbb{R}^{n})}\leq C.
\end{align*}

(C.4) There exists a constant $C>0$ such that
\begin{align*}
\sup_{Q}\frac{1}{|Q|}\int_{Q}|b(y)-b_{Q}|dy\leq C.
\end{align*}
\end{corollary}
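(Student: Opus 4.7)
The plan is to establish the cyclic chain $(C.1)\Rightarrow(C.2)\Rightarrow(C.3)\Rightarrow(C.4)\Rightarrow(C.1)$, following the same scheme used for Theorem~\ref{thm.2-0} but specialized to $\alpha=0$, so that $M_{\alpha,b}$ collapses to $M_b$ and the target space coincides with the source space $(E_p^q)_t(\mathbb{R}^n)$. Because $M_b$ is already positive and sublinear, the additional condition $b^{-}\in L^{\infty}$ that appears in Corollary~1.1 is not needed here.

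For $(C.1)\Rightarrow(C.2)$, my approach is to invoke the pointwise bound $M_b(f)(x)\leq C\|b\|_{\BMO}\,M(Mf)(x)$, valid for any $b\in\BMO(\mathbb{R}^n)$, and then apply twice the boundedness of the Hardy--Littlewood maximal operator on $(E_p^q)_t(\mathbb{R}^n)$, which is a known fact about slice spaces. For $(C.2)\Rightarrow(C.3)$, I would test the operator on $f=\chi_Q$: the cube $R=Q$ is admissible in the supremum defining $M_b(\chi_Q)(x)$, so $M_b(\chi_Q)(x)\geq |b(x)-b_Q|$ for $x\in Q$; combining this pointwise lower bound with the operator norm of $M_b$ and the estimate $\|\chi_Q\|_{(E_p^q)_t}\lesssim |Q|^{1/q}$, then raising to the $q$-th power, yields $(C.3)$.

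For $(C.3)\Rightarrow(C.4)$, I would use the H\"older/duality pairing between $(E_p^q)_t(\mathbb{R}^n)$ and its associate space $(E_{p'}^{q'})_{t'}(\mathbb{R}^n)$: writing $\int_Q |b(y)-b_Q|\,dy \leq \|(b-b_Q)\chi_Q\|_{(E_p^q)_t}\,\|\chi_Q\|_{(E_{p'}^{q'})_{t'}}$, the assumption $(C.3)$ controls the first factor by $C|Q|^{1/q}$ while the second factor is bounded by $C|Q|^{1/q'}$, and dividing by $|Q|$ produces the desired average estimate. Finally $(C.4)\Rightarrow(C.1)$ is immediate, because $(C.4)$ is literally the statement $\|b\|_{\BMO}<\infty$.

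The main obstacle is not in any individual implication but in assembling the auxiliary facts about the slice space $(E_p^q)_t(\mathbb{R}^n)$: the boundedness of the Hardy--Littlewood maximal operator, the two-sided estimate $\|\chi_Q\|_{(E_p^q)_t}\sim |Q|^{1/q}$, and the H\"older-type pairing with $(E_{p'}^{q'})_{t'}$. These ingredients should already be in place from the slice-space literature or the paper's preliminary section; once they are granted, the four implications are short arguments that closely parallel the proof of Theorem~\ref{thm.2-0}.
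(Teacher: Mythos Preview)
Your proposal is correct and follows essentially the same route as the paper: the paper does not prove Corollary~1.2 separately but states that it is obtained from Theorem~\ref{thm.2-0} by setting $\alpha=0$, and your cyclic chain $(C.1)\Rightarrow(C.2)\Rightarrow(C.3)\Rightarrow(C.4)\Rightarrow(C.1)$ reproduces exactly the specialization of that proof, using Lemma~2.5 (which for $\alpha=0$ reduces to your pointwise bound $M_b f\lesssim \|b\|_{\BMO}\,M(Mf)$), Lemma~2.2, Lemma~2.3, and the H\"older-type pairing. One small slip: the associate space of $(E_p^q)_t$ is $(E_{p'}^{q'})_t$ with the \emph{same} parameter $t$, not $t'$; this does not affect the argument.
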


Finally,~we study the boundedness of nonlinear commutators generated by the sharp maximal function and BMO function on slice spaces, and establish some new equivalent characterizations of BMO spaces.

\begin{theorem}\label{thm.3-0}
Let $b$~be~a locally integrable function on $\mathbb{R}^{n}.$ If $1<p<\infty,~1<q<\infty$,~then the following statements are equivalent.

(T.1) $b\in \mathrm{BMO}(\mathbb{R}^{n})$~and~$b^{-}\in L^{\infty}(\mathbb{R}^{n}).$

(T.2) $[b,M^{\sharp}]$ is bounded on $(E_{p}^{q})_{t}(\mathbb{R}^{n}).$

(T.3) There exists a constant $C>0$ such that
\begin{align*}
\sup_{Q}\frac{1}{|Q|}\left\|(b-2M^{\sharp}(b\chi_{Q}))\chi_{Q}\right\|^{q}_{(E_{p}^{q})_{t}(\mathbb{R}^{n})}\leq C.
\end{align*}

(T.4) There exists a constant $C>0$ such that
\begin{align*}
\sup_{Q}\frac{1}{|Q|}\int_{Q}|b(y)-2M^{\sharp}(b\chi_{Q})(y)|dy\leq C.
\end{align*}
\end{theorem}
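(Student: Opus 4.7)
I would follow the same cyclic strategy as in Theorems~\ref{thm.1-0} and \ref{thm.2-0}, proving $(\mathrm{T.1})\Rightarrow(\mathrm{T.2})\Rightarrow(\mathrm{T.3})\Rightarrow(\mathrm{T.4})\Rightarrow(\mathrm{T.1})$. The principal novelty compared with those theorems lies in the fact that $M^{\sharp}$ measures oscillation rather than size, so several of the standard pointwise tricks for $[b,M_{\alpha}]$ must be adapted.

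For $(\mathrm{T.1})\Rightarrow(\mathrm{T.2})$, the core step is the pointwise estimate
$$\bigl|[b,M^{\sharp}]f(x)\bigr|\le C\,M_{b}f(x)+C\,b^{-}(x)M^{\sharp}f(x),$$
obtained by writing
$$bf-(bf)_{Q}=(b-b(x))f+b(x)(f-f_{Q})+\bigl(b(x)f_{Q}-(bf)_{Q}\bigr),$$
taking absolute values, averaging over $Q$, and then cancelling the part $b(x)M^{\sharp}f(x)$ against $M^{\sharp}(bf)(x)$. Combined with $b^{-}\in L^{\infty}$ and the slice-space boundedness of both $M^{\sharp}$ and the maximal commutator $M_{b}$ (the latter being the $\alpha=0$ analogue of Theorem~\ref{thm.2-0}), this delivers (T.2).

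For $(\mathrm{T.2})\Rightarrow(\mathrm{T.3})$, I would test on $f=\chi_{Q}$. A direct computation, optimising the oscillation integral as the quantity $2t(1-t)$ at $t=|Q|/|R|=1/2$, shows that $M^{\sharp}\chi_{Q}(x)=1/2$ for every $x\in Q$, so that
$$\bigl[b,M^{\sharp}\bigr]\chi_{Q}(x)\,\chi_{Q}(x)=\tfrac{1}{2}\bigl(b(x)-2M^{\sharp}(b\chi_{Q})(x)\bigr)\chi_{Q}(x).$$
Taking the $(E_{p}^{q})_{t}$-norm of both sides and using the standard identity $\|\chi_{Q}\|_{(E_{p}^{q})_{t}}\approx|Q|^{1/q}$ yields (T.3). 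The implication $(\mathrm{T.3})\Rightarrow(\mathrm{T.4})$ is then a direct application of H\"older's inequality in the slice space together with the dual estimate $\|\chi_{Q}\|_{(E_{p'}^{q'})_{t'}}\approx|Q|^{1/q'}$, which converts the $L^{q}$-type average of (T.3) into the $L^{1}$-average of (T.4) up to the factor $|Q|$.

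The step $(\mathrm{T.4})\Rightarrow(\mathrm{T.1})$ is the main obstacle. The $L^{\infty}$-bound on $b^{-}$ is obtained cheaply: since $M^{\sharp}(b\chi_{Q})\ge 0$, one has
$$b^{-}(y)\le\bigl|b(y)-2M^{\sharp}(b\chi_{Q})(y)\bigr|\quad\text{for a.e. }y\in Q,$$
and averaging over small cubes around a Lebesgue point of $b^{-}$ gives $\|b^{-}\|_{L^{\infty}}\le C$. For the $\BMO$ bound I would combine the lower estimate $M^{\sharp}(b\chi_{Q})(y)\ge\frac{1}{|Q|}\int_{Q}|b-b_{Q}|\,dz$ (obtained by taking $R=Q$ in the supremum defining $M^{\sharp}$) with the decomposition $|b-b_{Q}|\le|b-2M^{\sharp}(b\chi_{Q})|+|2M^{\sharp}(b\chi_{Q})-b_{Q}|$. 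The first piece is directly controlled by (T.4); the oscillation $\frac{1}{|Q|}\int_{Q}|M^{\sharp}(b\chi_{Q})-(M^{\sharp}(b\chi_{Q}))_{Q}|\,dy$ is the delicate part, and I expect to bound it through the pointwise dominance $M^{\sharp}(b\chi_{Q})\le 2M(b\chi_{Q})$ and a Kolmogorov-type inequality, exploiting the newly established $L^{\infty}$ control on $b^{-}$ to convert $|b|$-averages back into $b$-averages and close the estimate.
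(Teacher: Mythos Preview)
Your cycle $(\mathrm{T.1})\Rightarrow(\mathrm{T.2})\Rightarrow(\mathrm{T.3})\Rightarrow(\mathrm{T.4})$ is correct and matches the paper almost step for step. The pointwise bound you obtain in $(\mathrm{T.1})\Rightarrow(\mathrm{T.2})$ is a slight variant of the paper's: the paper writes $b=|b|-2b^{-}$, quotes the estimate $|[\,|b|,M^{\sharp}]f|\le 2M_{|b|}f$ from \cite{zhang2014commutators}, and then controls $[b^{-},M^{\sharp}]$ by $b^{-}M^{\sharp}f+M^{\sharp}(b^{-}f)\le 2(b^{-}Mf+M(b^{-}f))$. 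Your direct decomposition of $(bf)(y)-(bf)_{Q}$ yields the cleaner bound $|[b,M^{\sharp}]f|\le 2M_{b}f+2b^{-}M^{\sharp}f$ without the extra $M(b^{-}f)$ term, but both routes finish via Corollary~1.2 and Lemma~2.2 in the same way. The implications $(\mathrm{T.2})\Rightarrow(\mathrm{T.3})$ and $(\mathrm{T.3})\Rightarrow(\mathrm{T.4})$ are identical to the paper's.

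Where you diverge is $(\mathrm{T.4})\Rightarrow(\mathrm{T.1})$. The paper does not argue this at all: it simply invokes Lemma~2.7 (the Bastero--Milman--Ruiz equivalence from \cite{bastero2000commutators}), which already states that condition (T.4) is equivalent to $b\in\BMO$ with $b^{-}\in L^{\infty}$. You are attempting to reprove that lemma from scratch. Your argument for $b^{-}\in L^{\infty}$ is fine, but your sketch for $b\in\BMO$ has a genuine gap: from the triangle inequality you reach
\[
\frac{1}{|Q|}\int_{Q}|b-b_{Q}|\le C+\frac{1}{|Q|}\int_{Q}\bigl|2M^{\sharp}(b\chi_{Q})-b_{Q}\bigr|\,dy,
\]
and you then propose to control the last integral via the oscillation of $M^{\sharp}(b\chi_{Q})$ on $Q$ together with $M^{\sharp}\le 2M$ and a Kolmogorov argument. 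But that integral is not an oscillation of $M^{\sharp}(b\chi_{Q})$; it contains the unbounded constant $b_{Q}$, and replacing $M^{\sharp}(b\chi_{Q})$ by $2M(b\chi_{Q})$ only makes the integrand larger. There is no evident absorption mechanism here, and the ``convert $|b|$-averages back into $b$-averages'' remark does not address the presence of $b_{Q}$. The Bastero--Milman--Ruiz proof of this implication is genuinely more delicate than your outline, so you should either cite it (as the paper does) or supply a complete argument.
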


  Throughout this paper, the letter $C$ always takes place of a constant independent of the primary parameters involved and whose value may differ from line to line. In addition, we give some notations. Here and hereafter $|E|$ will always denote the Lebesgue measure of a measurable set $E$ on $\mathbb{R}^{n}$ and by $\chi_{E}$ denotes the characteristic function of a measurable set $E \subset \mathbb{R}^{n}.$

\section{Preliminaries}

\subsection{Some function spaces}
\noindent This subsection introduces some main function spaces,~for example, slice,~BMO spaces.

In the last years, the research on slice spaces has attracted considerable attention. In 2019, the earliest lesson on slice spaces~$(E _{2}^{p})_{t}(\mathbb{R}^{n})(0<t<\infty,~1<p<\infty)$~can be traced back to the work of Auscher and Mourgolou in \cite{auscher2019representation}, for the purpose of the weak solutions of boundary value problems with a $t$-independent elliptic systems in the upper half plane. Recently, Auscher and Prisuelos-Arribas \cite{auscher2017tent} considered the boundedness of some classical operators on the slice space  $(E^{p}_{r})_{t}(\mathbb{R}^{n})(0<t<\infty$~and~$1<r,~p<\infty)$. Of course, for more research on slice spaces, we refer to see \cite{lu2022necessary,heng2023some}.
 \begin{definition}
 Let $0<t<\infty$~and~$1<r,~p<\infty.$ The slice space $(E^{p}_{r})_{t}(\mathbb{R}^{n})$ is defined as the set of all locally $r$-integrable functions $f$ on $\mathbb{R}^{n}$ such that
$$\|f\|_{(E^{p}_{r})_{t}(\mathbb{R}^{n})}=\left(\int_{\mathbb{R}^{n}}\left(\frac{1}{|Q(x,t)|}\int_{Q(x,t)}|f(y)|^{r}dy\right)^{p/r}\right)^{1/p}<\infty.$$
 \end{definition}

John and Nirenberg \cite{john1961functions} introduced the following space for the purpose of studying the partial differential equations problem.

\begin{definition}
Let $f\in L_{\loc}^{1}(\mathbb{R}^{n}),$~then the bounded mean oscillation (BMO) space can be defined by
\begin{align*}
\BMO(\mathbb{R}^{n})=\left\{f\in L_{\loc}^{1}(\mathbb{R}^{n}):\sup_{Q}\frac{1}{|Q|}\int_{Q}|f(y)-f_{Q}|dy<\infty\right\},
\end{align*}
where~$f_{Q}$ is the average of $f$ over $Q.$
\end{definition}
\subsection{Auxiliary propositions  and lemmas }
In this part we state some auxiliary propositions and lemmas which will be needed for proving our main theorems. And we only describe partial results we need.

First, we introduce the well-known H$\mathrm{\ddot{o}}$lder's inequality,~which plays an extensive role in the proof of this paper.
\begin{lemma}
Let~$1<p<\infty$~and~$p'$ is the conjugate exponent of $p.$~If $f\in L^{p}(\mathbb{R}^{n})~ \mathrm{and} ~g\in L^{p'}(\mathbb{R}^{n}).$ Then
$$\int_{\mathbb{R}^{n}}|f(x)g(x)|dx\leq C\|f\|_{L^{p}(\mathbb{R}^{n})}\|g\|_{L^{p'}(\mathbb{R}^{n})}.$$

\end{lemma}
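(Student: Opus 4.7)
The plan is to prove the classical Hölder inequality by reducing to the pointwise Young inequality and then integrating. First I would handle the trivial cases: if either $\|f\|_{L^p(\mathbb{R}^n)} = 0$ or $\|g\|_{L^{p'}(\mathbb{R}^n)} = 0$, then $f$ or $g$ vanishes almost everywhere, so both sides of the claimed inequality equal zero; and if either norm is infinite, the right-hand side is $+\infty$ and the inequality is vacuous. Hence I may assume $0 < \|f\|_{L^p(\mathbb{R}^n)}, \|g\|_{L^{p'}(\mathbb{R}^n)} < \infty$, and by homogeneity normalize so that $\|f\|_{L^p(\mathbb{R}^n)} = \|g\|_{L^{p'}(\mathbb{R}^n)} = 1$.

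The key analytic input is Young's inequality for products: for all $a,b \geq 0$ and conjugate exponents $1<p,p'<\infty$ with $1/p+1/p'=1$,
\begin{equation*}
ab \leq \frac{a^p}{p} + \frac{b^{p'}}{p'}.
\end{equation*}
I would derive this from the concavity of the logarithm (equivalently, the convexity of the exponential), writing $ab = \exp\bigl(\tfrac{1}{p}\log a^p + \tfrac{1}{p'}\log b^{p'}\bigr)$ for positive $a,b$ and applying Jensen's inequality; the case $a=0$ or $b=0$ is trivial.

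Next I would apply Young's inequality pointwise with $a = |f(x)|$ and $b = |g(x)|$, giving
\begin{equation*}
|f(x)g(x)| \leq \frac{|f(x)|^p}{p} + \frac{|g(x)|^{p'}}{p'} \quad \text{for a.e. } x \in \mathbb{R}^n.
\end{equation*}
Integrating over $\mathbb{R}^n$ and using the normalization yields
\begin{equation*}
\int_{\mathbb{R}^n} |f(x)g(x)|\,dx \leq \frac{1}{p} \|f\|_{L^p(\mathbb{R}^n)}^p + \frac{1}{p'} \|g\|_{L^{p'}(\mathbb{R}^n)}^{p'} = \frac{1}{p} + \frac{1}{p'} = 1 = \|f\|_{L^p(\mathbb{R}^n)} \|g\|_{L^{p'}(\mathbb{R}^n)}.
\end{equation*}
Undoing the normalization by homogeneity recovers the general inequality with constant $C=1$.

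There is no real obstacle here, as the argument is entirely classical; the only subtlety worth flagging is that the paper states the inequality with an unspecified constant $C$, so one does not even need the sharp constant $C=1$ — the Young-inequality argument delivers it for free. If a more conceptual route is preferred, one could instead invoke the concavity of $\log$ directly on the pointwise product, but the Young-inequality route is the shortest and is self-contained.
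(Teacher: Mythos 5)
Your proof is correct and complete: it is the standard derivation of H\"older's inequality via normalization and the pointwise Young inequality $ab \leq a^{p}/p + b^{p'}/p'$, and it even yields the sharp constant $C=1$. The paper itself gives no proof of this lemma, stating it only as the ``well-known'' H\"older inequality, so there is nothing to compare against; your argument is exactly the classical one any textbook would supply.
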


The following Lemmas 2.2 and 2.3 can be obtained in \cite{lu2022necessary}, where the Lemma 2.2 reveals the boundedness of the maximal function in the slice space, and Lemma 2.3 gives the norm of the characteristic function on the slice space, which is different from the norm of the characteristic function on the (variable) Lebesgue and Morrey spaces.

\begin{lemma}\label{lem.variable-proposition-B}
 Let $0<t<\infty$~and~$1<r,p<\infty.$~If~$f\in (E_{r}^{p})_{t}(\mathbb{R}^{n}),$~then
\begin{align*}
\|Mf\|_{(E_{r}^{p})_{t}(\mathbb{R}^{n})}\leq C\|f\|_{(E_{r}^{p})_{t}(\mathbb{R}^{n})}.
\end{align*}
\end{lemma}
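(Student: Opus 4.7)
The plan is to prove the maximal bound by decomposing $Mf$ pointwise into near- and far-field pieces relative to each cube $Q(x,t)$, and then reassembling via the outer $L^p$-norm that defines the slice space. For each $x \in \mathbb{R}^n$, write $f = f_1^x + f_2^x$ with $f_1^x = f\chi_{3Q(x,t)}$ and $f_2^x = f - f_1^x$. By sublinearity of $M$, for every $y \in Q(x,t)$ we have $Mf(y) \leq Mf_1^x(y) + Mf_2^x(y)$, so the two contributions can be estimated separately after raising to the $r$th power and averaging over $y \in Q(x,t)$.

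For the local piece, the classical $L^r$-boundedness of $M$ gives
\begin{align*}
\left(\frac{1}{|Q(x,t)|}\int_{Q(x,t)} |Mf_1^x(y)|^r dy\right)^{1/r} &\leq C \left(\frac{1}{|Q(x,t)|}\int_{3Q(x,t)} |f(y)|^r dy\right)^{1/r}\\
&\leq C\, A_{3t}(|f|^r)(x)^{1/r},
\end{align*}
where $A_s(g)(x) = |Q(x,s)|^{-1}\int_{Q(x,s)} g(y)\,dy$. For the global piece, any cube $Q \ni y$ meeting $(3Q(x,t))^c$ must satisfy $\ell(Q) \geq t$ and $Q \subset Q(x, 3\ell(Q))$ by a simple geometric check using $y \in Q(x,t)$, so uniformly in $y \in Q(x,t)$,
\begin{align*}
Mf_2^x(y) \leq C \sup_{s \geq t} A_s(|f|)(x).
\end{align*}

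After taking the $L^p$-norm in $x$, the proof reduces to two estimates: (i) $\|A_{3t}(|f|^r)^{1/r}\|_{L^p(\mathbb{R}^n)} \leq C\|f\|_{(E^p_r)_t(\mathbb{R}^n)}$, which follows by covering $Q(x,3t)$ with $O(1)$ translates of $Q(\cdot, t)$ and invoking translation invariance of $L^p$; and (ii) $\|\sup_{s \geq t} A_s(|f|)\|_{L^p(\mathbb{R}^n)} \leq C\|f\|_{(E^p_r)_t(\mathbb{R}^n)}$. For (ii), H\"older's inequality yields $(\sup_{s \geq t} A_s(|f|))^r \leq \sup_{s \geq t} A_s(|f|^r)$, and a Fubini-type calculation shows that $\sup_{s \geq t} A_s(|f|^r)(x) \leq C\, M(A_t(|f|^r))(x)$; when $p > r$ the classical $L^{p/r}$-boundedness of $M$ then closes the argument.

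The main obstacle is the global estimate in the regime $p \leq r$, where $L^{p/r}$-boundedness of $M$ is unavailable. In that range one instead exploits the equivalence of $\|\cdot\|_{(E^p_r)_t(\mathbb{R}^n)}$ with an amalgam $\ell^p(L^r)$-type norm on a dyadic tiling of $\mathbb{R}^n$ by cubes of side $t$: the tail averages are summed directly using the $\ell^p$-control on the local pieces $\|f\chi_{Q_k}\|_{L^r(\mathbb{R}^n)}$, and the slow polynomial decay of $|Q(x,s)|^{-1}$ for $s \geq t$ gives a summable geometric series in $s$. A secondary technical point is to verify that all implicit constants remain independent of the centre $x$ and of the scale $t$, so that no additional factor is picked up when reassembling the global and local contributions.
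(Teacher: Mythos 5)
The paper does not actually prove this lemma: it is quoted from reference \cite{lu2022necessary} without argument, so there is no internal proof to compare against. Your local/global decomposition $f=f\chi_{3Q(x,t)}+f\chi_{(3Q(x,t))^c}$ is the right idea and the local half is correct: the $L^r$-bound for $M$, the reduction to $A_{3t}(|f|^r)^{1/r}$, and the covering/translation-invariance step all go through. The geometric observation for the far field ($\ell(Q)\gtrsim t$ and $Q\subset Q(x,3\ell(Q))$, hence $Mf_2^x(y)\le C\sup_{s\ge t}A_s(|f|)(x)$ uniformly for $y\in Q(x,t)$) is also correct.

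The genuine gap is in your treatment of the tail estimate (ii) when $p\le r$. The phrase ``the slow polynomial decay of $|Q(x,s)|^{-1}$ for $s\ge t$ gives a summable geometric series in $s$'' is false: on a tiling by cubes of side $t$, the factor $s^{-n}$ is exactly cancelled by the $\sim (s/t)^n$ tiles meeting $Q(x,s)$, so the dyadic scales are critically balanced and nothing sums geometrically. What you actually face there is a discrete Hardy--Littlewood maximal operator on $\ell^p(\mathbb{Z}^n)$, whose boundedness requires $p>1$ --- which you have, but which must be invoked explicitly rather than replaced by a convergence-of-series argument. In fact the whole case split is an artifact of applying H\"older too early: your own Fubini computation gives, verbatim with $|f|$ in place of $|f|^r$, the pointwise bound $\sup_{s\ge t}A_s(|f|)(x)\le 2^nM\bigl(A_t(|f|)\bigr)(x)$; since $p>1$, the $L^p$-boundedness of $M$ yields $\|\sup_{s\ge t}A_s(|f|)\|_{L^p}\le C\|A_t(|f|)\|_{L^p}$, and Jensen's inequality $A_t(|f|)\le A_t(|f|^r)^{1/r}$ then closes the estimate with $\|f\|_{(E^p_r)_t(\mathbb{R}^n)}$ for every $1<r<\infty$, with no restriction relating $p$ and $r$ and no amalgam machinery. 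With that replacement your argument is complete.
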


\begin{lemma}
If $0<t<\infty$~and~$1<r,p<\infty,$~then for any cube $Q\subset \mathbb{R}^{n},$ we have
 $$\|\chi_{Q}\|_{(E_{r}^{p})_{t}(\mathbb{R}^{n})}\approx |Q|^{1/p}.$$
\end{lemma}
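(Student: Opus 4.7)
The idea is to unfold the $(E^{p}_{r})_{t}$-norm applied to $\chi_{Q}$, observe that the inner $r$-average reduces to the purely geometric quantity
\begin{equation*}
G(x):=\frac{|Q\cap Q(x,t)|}{t^{n}}\in[0,1],
\end{equation*}
and then estimate $\int G^{p/r}\,dx$ by means of two elementary facts: that $G\le 1$ pointwise, and that $\int_{\mathbb{R}^{n}}G(x)\,dx = |Q|$ by Fubini--Tonelli (writing $|Q\cap Q(x,t)|=\int\chi_Q(y)\chi_{Q(y,t)}(x)\,dy$ and integrating in $x$ first). Writing $\ell:=\ell(Q)$ and $c_{Q}$ for the centre, $G$ is supported on the enlarged cube $Q^{*}$ (centred at $c_{Q}$, side $\ell+t$) and equals $\min\{1,(\ell/t)^{n}\}$ on the inner cube $\widetilde{Q}$ (centred at $c_{Q}$, side $|\ell-t|$).

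\textbf{Steps.} Since $0\le G\le 1$, the pointwise comparison between $G^{p/r}$ and $G$ (whose direction is determined by whether $p\ge r$ or $p\le r$) together with $\int G = |Q|$ already yields one of the two required inequalities without effort: $\|\chi_{Q}\|^{p}\le|Q|$ when $p\ge r$, and $\|\chi_{Q}\|^{p}\ge|Q|$ when $p\le r$. For the converse bound I would split into two sub-cases. To produce a lower bound on $\int G^{p/r}$, I would restrict the integral to $\widetilde{Q}$, where $G\ge\min\{1,(\ell/t)^{n}\}$, obtaining the estimate $|\widetilde{Q}|\,\min\{1,(\ell/t)^{n}\}^{p/r}$. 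To produce an upper bound in the range $p\le r$, I would apply H\"older's inequality with exponents $r/p$ and $r/(r-p)$ over the support set $Q^{*}$:
\begin{equation*}
\int_{Q^{*}}G^{p/r}\,dx\;\le\;\Bigl(\int G\,dx\Bigr)^{p/r}|Q^{*}|^{1-p/r}\;=\;|Q|^{p/r}(\ell+t)^{n(1-p/r)}.
\end{equation*}
A short case split on $\ell$ versus $t$ then matches each of these expressions with $|Q|=\ell^{n}$ up to multiplicative constants, giving the two-sided bound $\|\chi_Q\|_{(E_r^p)_t}\approx |Q|^{1/p}$.

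\textbf{Main obstacle.} The delicate regime is $\ell\ll t$, where $G$ never attains $1$ and a naïve estimate carries an extra factor of $(t/\ell)^{n|1-p/r|}$ on one side. The equivalence "$\approx$" must therefore be read with implicit constants that are allowed to depend on the fixed parameter $t$ (together with $p$, $r$, $n$), and I expect the real content of the lemma to lie in the careful bookkeeping of these constants to ensure uniformity over $Q$. The analytic machinery itself is no more than Fubini and H\"older; the genuine work is in matching the upper and lower bounds across the two scale regimes $\ell\ge t$ and $\ell<t$, and this is where I would focus attention when writing the details.
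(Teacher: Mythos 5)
Your reduction of the norm to $\int_{\mathbb{R}^n}G(x)^{p/r}\,dx$ with $G(x)=|Q\cap Q(x,t)|/t^{n}$, the Fubini identity $\int G=|Q|$, and the localization of $G$ to $Q^{*}$ with the value $\min\{1,(\ell/t)^{n}\}$ on $\widetilde Q$ are all correct; this is the right way to attack the lemma (the paper itself gives no proof, only a citation of \cite{lu2022necessary}, so there is no argument to compare against). The genuine gap is exactly the point you defer to ``careful bookkeeping'': in the regime $\ell:=\ell(Q)<t$ the matching cannot be carried out, because the statement as written fails there whenever $p\neq r$. Your own estimates show this. For $\ell<t$ one has $G\le(\ell/t)^{n}$ everywhere, $G=(\ell/t)^{n}$ on $\widetilde Q$ (side $t-\ell$), and $G$ is supported in $Q^{*}$ (side $\ell+t\le 2t$), so
$$(t-\ell)^{n}(\ell/t)^{np/r}\;\le\;\int_{\mathbb{R}^{n}}G^{p/r}\,dx\;\le\;(2t)^{n}(\ell/t)^{np/r},$$
i.e.\ $\int G^{p/r}\approx t^{\,n(1-p/r)}\ell^{\,np/r}$ and hence $\|\chi_Q\|_{(E_r^p)_t}\approx t^{\,n(1/p-1/r)}|Q|^{1/r}$ for small cubes. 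The ratio of this to the claimed $|Q|^{1/p}$ is $(t/\ell)^{n(1/p-1/r)}$, which is unbounded above or below as $\ell\to0$ unless $p=r$; letting the implicit constants depend on $t$ does not help, since the discrepancy depends on $\ell$. A concrete counterexample: $n=1$, $t=1$, $r=2$, $p=4$, $Q=[0,\varepsilon]$ gives $\|\chi_Q\|_{(E_2^4)_1}\approx\varepsilon^{1/2}$ versus the claimed $\varepsilon^{1/4}$.

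So the honest output of your (correct) computation is the two-case formula: $\|\chi_Q\|_{(E_r^p)_t}\approx|Q|^{1/p}$ when $\ell(Q)\ge t$, and $\approx t^{\,n(1/p-1/r)}|Q|^{1/r}$ when $\ell(Q)<t$; the one-case statement holds only for $\ell(Q)\gtrsim t$ or for $p=r$. If you write up the case $\ell\ge t$, note two smaller repairs: the lower bound $|\widetilde Q|=(\ell-t)^{n}$ degenerates as $\ell\downarrow t$, so instead show $G\ge 2^{-n}$ on a cube of side comparable to $\ell$ centred at $c_Q$; the H\"older upper bound for $p\le r$ is fine there since $(\ell+t)^{n(1-p/r)}\le(2\ell)^{n(1-p/r)}$, and for $p\ge r$ your bound $\int G^{p/r}\le\int G=|Q|$ already suffices.
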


The following result shows the boundedness of the fractional maximal operator in slice spaces, which plays an important role in the estimates of commutators. For some details, we can see \cite{lu2022some}.

\begin{lemma}
 Let $0<\alpha<n,~0<t<\infty,~1<r,p<\infty$~and~$1<q<s<\infty$~with~$\alpha/n=1/p-1/r=1/q-1/s.$~If $f\in (E_{p}^{q})_{t}(\mathbb{R}^{n}),$~then
 \begin{align*}
\|M_{\alpha}f\|_{(E_{r}^{s})_{t}(\mathbb{R}^{n})}\leq C\|f\|_{(E_{p}^{q})_{t}(\mathbb{R}^{n})}.
\end{align*}
\end{lemma}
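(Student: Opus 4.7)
The plan is to reduce the claim to the discrete Hardy--Littlewood--Sobolev inequality via the well-known equivalence between slice spaces and Wiener amalgam spaces. First I fix a tiling $\{Q_\nu\}_{\nu\in t\mathbb{Z}^n}$ of $\mathbb{R}^n$ by cubes of side length $t$; a routine application of Fubini's theorem shows that $\|f\|_{(E_p^q)_t(\mathbb R^n)}$ is equivalent, up to constants depending only on $n$, to the amalgam-type expression $\bigl(\sum_\nu \|f\chi_{Q_\nu}\|_{L^p}^q\bigr)^{1/q}$, and likewise for the target norm $(E_r^s)_t$ with exponents $(r,s)$. On each tile $Q_\nu$ I split $f = f\chi_{3Q_\nu} + f\chi_{(3Q_\nu)^c} =: f_{1,\nu} + f_{2,\nu}$, so that $M_\alpha f(y) \leq M_\alpha f_{1,\nu}(y) + M_\alpha f_{2,\nu}(y)$ for every $y\in Q_\nu$ by sublinearity.

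For the local piece $f_{1,\nu}$ I invoke the classical Hardy--Littlewood--Sobolev bound $M_\alpha:L^p(\mathbb R^n)\to L^r(\mathbb R^n)$, which is available under the hypothesis $1/p-1/r=\alpha/n$, to obtain
\[
\|M_\alpha f_{1,\nu}\|_{L^r(Q_\nu)} \leq \|M_\alpha f_{1,\nu}\|_{L^r(\mathbb R^n)} \leq C\|f\chi_{3Q_\nu}\|_{L^p} \leq C\sum_{\mu\in N(\nu)}\|f\chi_{Q_\mu}\|_{L^p},
\]
where $N(\nu)$ is a uniformly bounded set of neighbours of $\nu$. Raising to the $s$th power, summing in $\nu$, using the finite overlap of the $N(\nu)$, and finally the elementary embedding $\ell^q\hookrightarrow\ell^s$ (valid because $s>q$) absorbs this contribution into the target amalgam norm with exponent $q$.

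For the far piece the decisive geometric observation is that any cube $Q'\ni y\in Q_\nu$ meeting the support of $f_{2,\nu}$ satisfies $\ell(Q')\gtrsim t$. Grouping such cubes by the dyadic scale of $\ell(Q')/t$ and applying H\"older's inequality inside each contributing tile $Q_\mu$, a short calculation in which the $t$-powers $t^{n/r+n/p'-(n-\alpha)}$ collapse to a constant thanks to the hypothesis $1/p-1/r=\alpha/n$ yields
\[
\|M_\alpha f_{2,\nu}\|_{L^r(Q_\nu)} \leq C\sum_{\mu\notin N(\nu)} \frac{\|f\chi_{Q_\mu}\|_{L^p}}{(|\nu-\mu|/t)^{n-\alpha}}.
\]
The right-hand side is, up to constants, a discrete Riesz potential on the lattice $t\mathbb{Z}^n$ applied to the sequence $a_\mu:=\|f\chi_{Q_\mu}\|_{L^p}$; the discrete Hardy--Littlewood--Sobolev inequality then maps $\ell^q$ into $\ell^s$ precisely when $1/q-1/s=\alpha/n$, matching exactly the second hypothesis.

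The main obstacle is the step above: producing the pointwise dyadic-annulus bound of $M_\alpha f_{2,\nu}$ with the correct $t$-scaling so that, after normalising and summing in $\ell^s$, the result is recognisable as a discrete Riesz potential with kernel $|\nu-\mu|^{\alpha-n}$, and then invoking (or proving inline, via Young's inequality on $\ell^1$ or a Schur test) the discrete Hardy--Littlewood--Sobolev inequality with the sharp exponent relation. Once these ingredients are in place, the local and far contributions combine by the triangle inequality in the amalgam norm, and the reverse direction of the slice--amalgam equivalence delivers $\|M_\alpha f\|_{(E_r^s)_t(\mathbb R^n)}\leq C\|f\|_{(E_p^q)_t(\mathbb R^n)}$.
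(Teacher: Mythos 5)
The paper does not prove this lemma at all: it is quoted as Lemma 2.4 with a citation to Lu--Wang--Zhou \cite{lu2022some}, so there is no internal proof to compare against. Your argument is the standard self-contained route and it is essentially correct: the identification of $(E_p^q)_t$ with the discrete amalgam norm $\bigl(\sum_\nu\|f\chi_{Q_\nu}\|_{L^p}^q\bigr)^{1/q}$ is valid (note the paper's convention puts the local exponent in the subscript, so your pairing of $p$ local / $q$ global is the right one), the near piece is handled correctly by the Euclidean $L^p\to L^r$ bound for $M_\alpha$ plus finite overlap and $\ell^q\hookrightarrow\ell^s$, and your exponent bookkeeping for the far piece checks out: $n/r+n/p'-(n-\alpha)+ (\alpha-n)\cdot 0$ collapses because $1/r-1/p+\alpha/n=0$, leaving exactly the normalized discrete Riesz potential with kernel $\max(|k-m|,1)^{\alpha-n}$, which maps $\ell^q\to\ell^s$ under $1/q-1/s=\alpha/n$. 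The one point to tighten is the closing tool you propose: plain Young's convolution inequality on the lattice does not work at the critical exponent, since it would require the kernel to lie in $\ell^{n/(n-\alpha)}$, and $\sum_{m\neq 0}|m|^{-n}$ diverges. You should instead invoke the weak-type Young inequality, or run the Hedberg trick on the lattice (split the sum at radius $R$, bound the near part by $R^\alpha$ times the discrete maximal function and the far part by $R^{\alpha-n/q}\|a\|_{\ell^q}$ via H\"older, then optimize in $R$); either closes the discrete Hardy--Littlewood--Sobolev step. With that correction the proof is complete, and it is arguably more informative than the paper's bare citation.
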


The following estimate can be found in \cite{guliyev2021commutators}.
\begin{lemma}
Let $0<\alpha<n$, if $b\in \BMO(\mathbb{Q}_{p}^{n}),$ then there exists a constant $C>0,$ such that for almost every $x\in \mathbb{R}^{n}$ and for all functions $f\in L_{\mathrm{loc}}^{1}(\mathbb{R}^{n}),$ we have
\begin{align*}
 M_{\alpha,b}(f)(x)\leq C\|b\|_{\ast}(M(M_{\alpha}f)(x)+M_{\alpha}(Mf)(x)).
\end{align*}
\end{lemma}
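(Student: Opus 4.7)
The plan is to fix $x\in\mathbb{R}^n$ and an arbitrary cube $Q\ni x$, and split the integrand defining $M_{\alpha,b}(f)(x)$ via the triangle inequality with respect to the average $b_Q$:
\begin{align*}
|b(x)-b(y)|\leq |b(x)-b_Q|+|b(y)-b_Q|.
\end{align*}
This yields the pointwise bound $M_{\alpha,b}(f)(x)\leq I_1(x)+I_2(x)$, where
\begin{align*}
I_1(x)&=\sup_{Q\ni x}|b(x)-b_Q|\cdot\frac{1}{|Q|^{1-\alpha/n}}\int_Q|f(y)|\,dy, \\
I_2(x)&=\sup_{Q\ni x}\frac{1}{|Q|^{1-\alpha/n}}\int_Q|b(y)-b_Q||f(y)|\,dy.
\end{align*}
The goal is then to show that $I_2(x)\leq C\|b\|_{\ast}M_{\alpha}(Mf)(x)$ and $I_1(x)\leq C\|b\|_{\ast}M(M_{\alpha}f)(x)$, which together give the claim.

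For $I_2$, I would apply H\"older's inequality at some exponent $r>1$ sufficiently close to $1$, and then invoke the John--Nirenberg inequality in the form $\bigl(\tfrac{1}{|Q|}\int_Q|b-b_Q|^{r'}\,dy\bigr)^{1/r'}\leq C\|b\|_{\ast}$. After simplifying the fractional normalization $|Q|^{\alpha/n}$, the inner expression is reduced to
\begin{align*}
C\|b\|_{\ast}\cdot|Q|^{\alpha/n}\Bigl(\frac{1}{|Q|}\int_Q|f(y)|^{r}\,dy\Bigr)^{1/r}.
\end{align*}
Since the $L^r$-average of $|f|$ over $Q$ is pointwise controlled by $Mf$ on $Q$ (via $L^r$-boundedness of $M$ on $Q$), this is in turn bounded by $\tfrac{C\|b\|_{\ast}}{|Q|^{1-\alpha/n}}\int_Q Mf(z)\,dz\leq C\|b\|_{\ast}M_{\alpha}(Mf)(x)$ after taking the supremum over $Q\ni x$.

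For $I_1$, the obstacle is that $|b(x)-b_Q|$ is a pointwise quantity at $x$ and has no uniform $\|b\|_{\ast}$-control. My plan is to rewrite $|b(x)-b_Q|=\tfrac{1}{|Q|}\int_Q |b(x)-b_Q|\,dz$ and insert the nested triangle inequality $|b(x)-b_Q|\leq|b(x)-b(z)|+|b(z)-b_Q|$ inside the $z$-average. The $|b(z)-b_Q|$ piece is then handled by H\"older and John--Nirenberg exactly as in $I_2$, producing a contribution controlled by $\|b\|_{\ast}M_{\alpha}(Mf)(x)$; the $|b(x)-b(z)|$ piece pairs with the $f$-average over $Q$, and after a second H\"older/John--Nirenberg step applied to the $b$-factor yields a term bounded by $\|b\|_{\ast}$ times the $Q$-average of $M_{\alpha}f$, which upon taking the supremum over $Q\ni x$ is dominated by $\|b\|_{\ast}M(M_{\alpha}f)(x)$.

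The main obstacle I expect is precisely the control of $I_1$: the pointwise factor $|b(x)-b_Q|$ must be traded for a BMO bound through an additional averaging step, and careful bookkeeping of the fractional normalization $|Q|^{\alpha/n}$ is needed to route one resulting term into $M(M_{\alpha}f)$ and the other into $M_{\alpha}(Mf)$. The simultaneous appearance of both iterated maximal functions in the final inequality reflects exactly these two complementary dominating mechanisms.
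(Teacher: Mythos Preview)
The paper does not prove this lemma; it merely cites \cite{guliyev2021commutators}. So there is no in-paper argument to compare against, and the question is whether your sketch stands on its own. It does not: both the $I_2$ and the $I_1$ steps contain real gaps.

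For $I_2$, after H\"older at exponent $r>1$ and John--Nirenberg you correctly arrive at
\[
C\|b\|_{\ast}\,|Q|^{\alpha/n}\Bigl(\tfrac{1}{|Q|}\int_Q|f|^{r}\Bigr)^{1/r},
\]
but the passage from this to $\tfrac{C\|b\|_\ast}{|Q|^{1-\alpha/n}}\int_Q Mf$ is false as stated. The inequality $\bigl(\tfrac{1}{|Q|}\int_Q|f|^{r}\bigr)^{1/r}\le \tfrac{C}{|Q|}\int_Q Mf$ fails in general (take $f=\chi_E$ with $|E|\ll|Q|$: the left side is $(|E|/|Q|)^{1/r}$, the right side is of order $(|E|/|Q|)\log(|Q|/|E|)$). ``$L^r$-boundedness of $M$ on $Q$'' does not produce this bound. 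The device that actually works here is the Orlicz-space H\"older inequality for the pair $(\exp L,\,L\log L)$: John--Nirenberg gives $\|b-b_Q\|_{\exp L,Q}\le C\|b\|_\ast$, and Stein's lemma gives $\|f\|_{L\log L,Q}\le \tfrac{C}{|Q|}\int_Q M_Qf\le \tfrac{C}{|Q|}\int_Q Mf$. Combining these yields $\tfrac{1}{|Q|}\int_Q|b-b_Q||f|\le C\|b\|_\ast\tfrac{1}{|Q|}\int_Q Mf$, which is what you need.

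For $I_1$, your re-averaging trick is circular. Writing $|b(x)-b_Q|\le \tfrac{1}{|Q|}\int_Q|b(x)-b(z)|\,dz+\tfrac{1}{|Q|}\int_Q|b(z)-b_Q|\,dz$, the second piece is indeed $\le\|b\|_\ast$, but the first piece $\tfrac{1}{|Q|}\int_Q|b(x)-b(z)|\,dz$ still contains the pointwise value $b(x)$ and is \emph{not} controlled by $C\|b\|_\ast$ (it is $\ge|b(x)-b_Q|$, the very quantity you started with). No amount of H\"older/John--Nirenberg in the $z$-variable removes this, because $b(x)$ is a constant in $z$. The bound $M_{\alpha,b}f\le C\|b\|_\ast(M M_\alpha f+M_\alpha Mf)$ genuinely requires the $L\log L$ machinery (or an equivalent endpoint argument) rather than an $L^r$-H\"older step; your outline identifies the right decomposition but supplies the wrong tool at the crucial points.
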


\begin{lemma}\cite{zhang2009commutators}
Let $b\in L^{1}_{loc}(\mathbb{R}^{n}).$ For any fixed cube $Q\subset \mathbb{R}^{n}.$

(1) If $0\leq \alpha<n,$ then for all $y\in Q,$ we have
$$M_{\alpha}(b\chi_{Q})(y)=M_{\alpha,Q}(b)(y)$$
and
$$M_{\alpha}(\chi_{Q})(y)=M_{\alpha,Q}(\chi_{Q} )(y)=|Q|^{\frac{\alpha}{n}}.$$

(2) Then for any $y\in Q,$ we have
$$|b_{Q}|\leq |Q|^{-\frac{\alpha}{n}}M_{\alpha,Q}(b)(y). $$

(3) Let $E=\{y\in Q:b(y)\leq b_{Q}\}$ and $F=Q\setminus E=\{y\in Q:b(y)> b_{Q}\}.$ Then the following equality is trivially true
$$\int_{E}|b(y)-b_{Q}|dy=\int_{F}|b(y)-b_{Q}|dy. $$
\end{lemma}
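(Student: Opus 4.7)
The lemma bundles three essentially independent claims, so I would address each in turn, with part (1) carrying the only real geometric content.

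For part (1), the core assertion is $M_{\alpha}(b\chi_Q)(y) = M_{\alpha,Q}(b)(y)$ for $y \in Q$. The direction $\geq$ is immediate: for any sub-cube $Q' \subseteq Q$ with $y \in Q'$, we have $b\chi_Q = b$ on $Q'$, so the restricted supremum is dominated by the unrestricted one. For the reverse $\leq$, fix any cube $Q' \ni y$ and write
$$I(Q') := \frac{1}{|Q'|^{1-\alpha/n}}\int_{Q'} |b(z)\chi_Q(z)|\,dz = \frac{1}{|Q'|^{1-\alpha/n}}\int_{Q'\cap Q} |b(z)|\,dz.$$
If $Q' \subseteq Q$, then $I(Q')$ is already one of the terms in the $M_{\alpha,Q}$ supremum. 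If $Q \subseteq Q'$, then $Q'\cap Q = Q$, and $|Q'|^{1-\alpha/n} \geq |Q|^{1-\alpha/n}$ (using $1-\alpha/n > 0$) gives $I(Q') \leq |Q|^{-(1-\alpha/n)}\int_Q |b|$, which is realized in the supremum by taking the admissible sub-cube $Q$ itself. The remaining ``overlap'' case, where $Q$ and $Q'$ are not nested, is handled by combining these two estimates (effectively reducing to the larger of $Q$ and $Q'$). The companion equality $M_{\alpha}(\chi_Q)(y) = M_{\alpha,Q}(\chi_Q)(y) = |Q|^{\alpha/n}$ then follows by specializing to $b \equiv 1$: the integrand inside the supremum reduces to $|Q'|^{\alpha/n}$ on sub-cubes $Q' \subseteq Q$, and this is maximized by $Q' = Q$.

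Part (2) falls out of (1) with no additional work: for any $y \in Q$,
$$|b_Q| \leq \frac{1}{|Q|}\int_Q |b(z)|\,dz = |Q|^{-\alpha/n}\cdot\frac{1}{|Q|^{1-\alpha/n}}\int_Q |b(z)|\,dz \leq |Q|^{-\alpha/n}\, M_{\alpha,Q}(b)(y),$$
since the middle quantity is exactly the $M_{\alpha,Q}(b)(y)$-term produced by choosing $Q$ itself as the admissible sub-cube containing $y$. Part (3) is pure bookkeeping around $\int_Q (b-b_Q)\,dy = 0$: splitting the integral over $E$ and $F$ and using the signs $b-b_Q \leq 0$ on $E$ and $b - b_Q > 0$ on $F$ yields $\int_E |b-b_Q|\,dy = -\int_E (b-b_Q)\,dy = \int_F (b-b_Q)\,dy = \int_F |b-b_Q|\,dy$.

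The main obstacle is the overlap case in part (1): when $Q$ and $Q'$ are neither nested, the set $Q' \cap Q$ is a rectangle rather than a cube, so one cannot simply substitute a single admissible sub-cube of $Q$ that captures the same mass. The resolution is geometric: use the reduction to the two nested cases via $|Q' \cap Q| \leq \min(|Q|,|Q'|)$ together with the monotonicity $s \mapsto s^{1-\alpha/n}$ to show that the larger of the two cubes $Q, Q'$ always dominates the overlap case. Once this geometric point is settled, the remaining parts (2) and (3) are routine.
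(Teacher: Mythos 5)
Parts (2) and (3) are correct and complete, and the nested cases of part (1) are fine, but your resolution of the ``overlap'' case contains a genuine gap. When $Q'\ni y$ is \emph{smaller} than $Q$ and not contained in it, the reduction ``to the larger of the two cubes'' fails: you would need
\begin{equation*}
\frac{1}{|Q'|^{1-\alpha/n}}\int_{Q'\cap Q}|b(z)|\,dz \;\leq\; \frac{1}{|Q|^{1-\alpha/n}}\int_{Q}|b(z)|\,dz,
\end{equation*}
and the inequality $|Q'\cap Q|\le\min(|Q|,|Q'|)$ does not give this, because the small normalizing factor $|Q'|^{1-\alpha/n}$ sits in the denominator on the left. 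Concretely, take $n=1$, $\alpha=0$, $Q=[0,1]$, $y=\varepsilon$, $Q'=[\varepsilon-\delta,\varepsilon+\delta]$ with $\varepsilon<\delta\ll1$, and $b=\chi_{[0,\varepsilon+\delta]}$: the left side is $(\varepsilon+\delta)/(2\delta)\approx 1/2$ while the right side is $\varepsilon+\delta$, which is as small as you like. So the larger cube does \emph{not} dominate in this subcase.

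The correct repair is a translation argument rather than a size comparison. If $\ell(Q')\le\ell(Q)$ and $Q'\cap Q\neq\emptyset$, then in each coordinate the intersection interval has length at most $\ell(Q')$ and lies inside the corresponding side of $Q$, so one can choose a cube $Q''$ with $\ell(Q'')=\ell(Q')$ and $Q'\cap Q\subseteq Q''\subseteq Q$; since $y\in Q'\cap Q\subseteq Q''$ and $|Q''|=|Q'|$, one gets
\begin{equation*}
\frac{1}{|Q'|^{1-\alpha/n}}\int_{Q'\cap Q}|b(z)|\,dz \;\le\; \frac{1}{|Q''|^{1-\alpha/n}}\int_{Q''}|b(z)|\,dz \;\le\; M_{\alpha,Q}(b)(y),
\end{equation*}
which closes the case. (The subcase $\ell(Q')\ge\ell(Q)$ is handled exactly as you did for $Q\subseteq Q'$.) Note that the paper itself offers no proof of this lemma --- it is quoted from Zhang--Wu --- so this is the only substantive point to fix; with the translation step inserted, your argument for all three parts is sound.
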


Bastero et al. \cite{bastero2000commutators} obtained the following equivalent relations for BMO space.

\begin{lemma}
Let b be a locally integrable function on $\mathbb{R}^{n}.$ Then the following assertions are equivalent:

(1) $b\in \BMO(\mathbb{R}^{n})$ and $b^{-}\in L^{\infty}(\mathbb{R}^{n}).$

(2) There exists a positive constant $C$ such that
\begin{align*}
\sup_{Q}\frac{1}{|Q|}\int_{Q}|b(y)-M_{Q}(b)(y)|dy\leq C.
\end{align*}

(3)  There exists a positive constant $C$ such that
\begin{align*}
\sup_{Q}\frac{1}{|Q|}\int_{Q}|b(y)-2M^{\sharp}(b\chi_{Q})(y)|dy\leq C.
\end{align*}
\end{lemma}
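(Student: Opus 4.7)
The statement is the Bastero--Ruiz--Torres characterization of $\BMO \cap \{b^- \in L^\infty\}$, and I would prove it by establishing the two equivalences $(1) \Leftrightarrow (2)$ and $(1) \Leftrightarrow (3)$ in parallel, exploiting two crucial pointwise facts on a fixed cube $Q$: (a) $M_Q(b)(y) \geq |b(y)|$ for a.e.\ $y \in Q$ (Lebesgue differentiation inside the cube), and (b) for every $y \in Q$, taking the admissible choice $Q' = Q$ in the supremum defining $M^{\sharp}$ gives $2 M^{\sharp}(b\chi_Q)(y) \geq \tfrac{2}{|Q|}\int_Q |b-b_Q|\,dy$.

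For $(1) \Rightarrow (2)$, I would start from the identity $|b - M_Q(b)| = M_Q(b) - b = (M_Q(b) - |b|) + 2 b^-$, absorbing the contribution $\tfrac{1}{|Q|}\int_Q 2 b^- \leq 2\|b^-\|_\infty$ directly, and controlling $\tfrac{1}{|Q|}\int_Q (M_Q(b) - |b|)$ via the sublinearity $M_Q(b) \leq M_Q(b-b_Q) + |b_Q|$ together with the $L\log L \to L^1$ boundedness of $M_Q$ and the John--Nirenberg inequality $\|b-b_Q\|_{L\log L(Q)} \lesssim \|b\|_* |Q|$. The implication $(1)\Rightarrow (3)$ would be handled similarly after the triangle-inequality split $|b - 2M^{\sharp}(b\chi_Q)| \leq |b-b_Q| + |b_Q - 2M^{\sharp}(b\chi_Q)|$, using the pointwise bound $M^{\sharp}(b\chi_Q)(y) \leq 2 M(b\chi_Q)(y) = 2 M_Q(b)(y)$ on $Q$ (the last identification coming from Lemma~2.6(1)).

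For $(2) \Rightarrow (1)$, the pointwise inequality $M_Q(b)(y) - b(y) \geq 2 b^-(y)$ averaged over $Q$ plus Lebesgue differentiation immediately yields $\|b^-\|_\infty \leq C/2$. To extract the $\BMO$ norm, I would invoke Lemma~2.6(3) to rewrite $\tfrac{1}{|Q|}\int_Q |b-b_Q| = \tfrac{2}{|Q|}\int_F (b - b_Q)$ with $F = \{y \in Q : b(y) > b_Q\}$, then estimate $\int_F(b - b_Q) \leq \int_F(M_Q(b) - b_Q) \leq \int_Q(M_Q(b) - b_Q)$, which is legitimate because $M_Q(b)(y) \geq |b|_Q \geq b_Q$ on all of $Q$; finally, I would identify $\int_Q(M_Q(b) - b_Q) = \int_Q(M_Q(b) - b) = \int_Q |M_Q(b) - b| \leq C|Q|$ using that $M_Q(b) \geq b$ a.e. The implication $(3)\Rightarrow (1)$ then follows along the same lines, substituting the lower bound $2 M^{\sharp}(b\chi_Q)(y) \geq \tfrac{2}{|Q|}\int_Q |b - b_Q|$ for the role played above by $M_Q(b)$ and extracting first the bound on $b^-$ (by a sign analysis separating $\{b < 0\}$ from $\{b \geq 0\}$) and then $\|b\|_*$.

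The principal obstacle is the sharp-function side, since $M^{\sharp}(b\chi_Q)$ does not dominate $|b|$ pointwise as $M_Q(b)$ does. The remedy is precisely the factor $2$ in front of $M^{\sharp}$ in condition (3), which couples the upper bound $M^{\sharp}(b\chi_Q) \leq 2 M_Q(b)$ with the lower bound $M^{\sharp}(b\chi_Q) \geq \tfrac{1}{|Q|}\int_Q |b-b_Q|$ in the right arithmetic proportion; keeping this factor correctly tracked across the chain of inequalities is where the delicate bookkeeping must line up.
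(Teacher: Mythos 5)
First, note that the paper itself does not prove this lemma: it is quoted from Bastero--Milman--Ruiz \cite{bastero2000commutators}, where the equivalences are obtained by closing a cycle through the $L^{p}$-boundedness of the commutators $[b,M]$ and $[b,M^{\sharp}]$ (testing on $\chi_{Q}$ yields (2) and (3), and only the step back to (1) is done directly). Your plan is a fully direct proof, and its Hardy--Littlewood half, $(1)\Leftrightarrow(2)$, is correct and standard: the decomposition $M_{Q}(b)-b=(M_{Q}(b)-|b|)+2b^{-}$, the $L\log L$ control of $M_{Q}(b-b_{Q})$ via John--Nirenberg, and the $E/F$ symmetry argument for the converse all go through.

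The sharp-function half, however, has two genuine gaps, both traceable to the fact that the only properties of $M^{\sharp}(b\chi_{Q})$ you invoke --- the upper bound $M^{\sharp}(b\chi_{Q})\le 2M_{Q}(b)$ and the lower bound $2M^{\sharp}(b\chi_{Q})(y)\ge\tfrac{2}{|Q|}\int_{Q}|b-b_{Q}|$ --- are too crude. In $(1)\Rightarrow(3)$ you must bound $\tfrac{1}{|Q|}\int_{Q}|b_{Q}-2M^{\sharp}(b\chi_{Q})|$, but $0\le 2M^{\sharp}(b\chi_{Q})\le 4M_{Q}(b)$ only yields an average of order $|b_{Q}|+\|b\|_{*}$, and $|b_{Q}|$ is not uniformly bounded for $b\in\BMO$; the cancellation of $b_{Q}$ is precisely what the factor $2$ is for, and to see it you need the identity $M^{\sharp}(\chi_{Q})=\tfrac12$ on $Q$ together with sublinearity of $M^{\sharp}$, giving $\bigl|2M^{\sharp}(b\chi_{Q})-|b_{Q}|\bigr|\le 2M^{\sharp}((b-b_{Q})\chi_{Q})\le 4M_{Q}(b-b_{Q})$ on $Q$ (equivalently, the estimate $|[|b|,M^{\sharp}]\chi_{Q}|\le 2M_{|b|}\chi_{Q}$ that the paper uses in proving Theorem 1.3). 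Symmetrically, in $(3)\Rightarrow(1)$ the $E/F$ argument requires $2M^{\sharp}(b\chi_{Q})\ge b_{Q}$ pointwise on $Q$, so that $b_{Q}-b\le 2M^{\sharp}(b\chi_{Q})-b$ on $E=\{y\in Q: b(y)\le b_{Q}\}$; this does not follow from the lower bound $\tfrac{2}{|Q|}\int_{Q}|b-b_{Q}|$ you propose to substitute for the role of $M_{Q}(b)$, and must instead be obtained by testing the supremum defining $M^{\sharp}$ on a cube $Q'\supset Q$ with $|Q'|=2|Q|$, which gives $2M^{\sharp}(b\chi_{Q})\ge|b_{Q}|$ on $Q$. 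Your extraction of $\|b^{-}\|_{L^{\infty}}$ from (3) (positivity of $M^{\sharp}$ plus Lebesgue differentiation on $\{b<0\}$) is fine. With these two missing ingredients supplied, the direct proof closes.
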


\section{Proof of the principal results}
\begin{lemma}Let $0<\alpha<n$ and $b$~be~a locally integrable function on $\mathbb{R}^{n}.$ If there exists a constant $C>0$ such that
\begin{align*}
\sup_{Q}\frac{1}{|Q|^{1/s}}\left\|(b-|Q|^{-\alpha/n}M_{\alpha,Q}(b))\chi_{Q}\right\|_{(E_{r}^{s})_{t}(\mathbb{R}^{n})}\leq C
\end{align*}
for $1<r,~s<\infty,$~
then $b\in \BMO(\mathbb{R}^{n}).$
\end{lemma}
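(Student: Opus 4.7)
The plan is to bound the BMO seminorm $\sup_Q \frac{1}{|Q|}\int_Q |b(y) - b_Q|\,dy$ directly using the hypothesis, the $b_Q$--$M_{\alpha,Q}(b)$ comparison from Lemma~2.7, and a slice-space H\"older argument combined with Lemma~2.3.

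First, I would fix an arbitrary cube $Q$ and split it as $Q = E \cup F$ with $E = \{y \in Q : b(y) \leq b_Q\}$ and $F = Q \setminus E$. By Lemma~2.7(3),
\begin{equation*}
\int_Q |b(y) - b_Q|\,dy = 2\int_E |b(y) - b_Q|\,dy = 2\int_E (b_Q - b(y))\,dy.
\end{equation*}
The reason for reducing to $E$ is that on $E$ the expression $b_Q - b(y)$ is nonnegative and can be dominated using Lemma~2.7(2). Indeed, for $y \in E$ we have $b(y) \leq b_Q \leq |b_Q| \leq |Q|^{-\alpha/n}M_{\alpha,Q}(b)(y)$, so
\begin{equation*}
0 \leq b_Q - b(y) \leq |Q|^{-\alpha/n}M_{\alpha,Q}(b)(y) - b(y) = \bigl||Q|^{-\alpha/n}M_{\alpha,Q}(b)(y) - b(y)\bigr|.
\end{equation*}
Extending the integration back to all of $Q$ (the integrand is pointwise nonnegative) gives
\begin{equation*}
\int_Q |b(y) - b_Q|\,dy \leq 2\int_Q \bigl|b(y) - |Q|^{-\alpha/n}M_{\alpha,Q}(b)(y)\bigr|\,dy.
\end{equation*}

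Next, I would apply H\"older's inequality on the slice space to the right-hand side, writing the integral as $\int_{\mathbb{R}^n} |g(y)\chi_Q(y)|\,dy$ with $g = b - |Q|^{-\alpha/n}M_{\alpha,Q}(b)$, which yields
\begin{equation*}
\int_Q |g(y)|\,dy \leq C\,\|g\chi_Q\|_{(E_r^s)_t(\mathbb{R}^n)} \,\|\chi_Q\|_{(E_{r'}^{s'})_t(\mathbb{R}^n)}.
\end{equation*}
By Lemma~2.3, $\|\chi_Q\|_{(E_{r'}^{s'})_t(\mathbb{R}^n)} \approx |Q|^{1/s'}$. Combining this with the previous estimate and dividing by $|Q|$ gives
\begin{equation*}
\frac{1}{|Q|}\int_Q |b(y) - b_Q|\,dy \leq \frac{C}{|Q|^{1/s}} \bigl\|(b - |Q|^{-\alpha/n}M_{\alpha,Q}(b))\chi_Q\bigr\|_{(E_r^s)_t(\mathbb{R}^n)},
\end{equation*}
and taking the supremum over $Q$, the hypothesis finishes the proof.

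The main obstacle I anticipate is justifying the slice-space H\"older inequality and the identification $\|\chi_Q\|_{(E_{r'}^{s'})_t} \approx |Q|^{1/s'}$ for the conjugate exponents; these are standard for slice spaces, and the paper's Lemma~2.3 gives precisely the needed characteristic-function norm, so the argument reduces to bookkeeping with the $|Q|$-powers. The rest of the proof is routine manipulation using the pointwise inequalities from Lemma~2.7.
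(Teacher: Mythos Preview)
Your argument is correct and follows essentially the same route as the paper's own proof: the $E/F$ splitting, the pointwise domination $|b-b_Q|\le |b-|Q|^{-\alpha/n}M_{\alpha,Q}(b)|$ on $E$, and then a H\"older/duality step together with Lemma~2.3 to recover the slice-space norm from the hypothesis. One bookkeeping correction: the facts you cite as ``Lemma~2.7(2)'' and ``Lemma~2.7(3)'' are actually Lemma~2.6(2) and Lemma~2.6(3) in the paper; Lemma~2.7 is the Bastero--Milman--Ruiz equivalence and has no part~(2) or~(3).
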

\begin{proof}
For any fixed cube $Q\subset \mathbb{R}^{n},$ according to Lemma 2.6 (2), if $y\in Q,$ then
\begin{align*}
|b_{Q}|\leq |Q|^{-\frac{\alpha}{n}}M_{\alpha,Q}(b)(y).
\end{align*}
Let $E=\{y\in Q:b(y)\leq b_{Q}\}$ and $F=Q\setminus E=\{y\in Q:b(y)> b_{Q}\},$ then for any $y\in E,$ we have $b(y)\leq b_{Q}\leq |b_{Q}|\leq |Q|^{-\frac{\alpha}{n}}M_{\alpha,Q}(b)(y),$ which implies
$$|b-b_{Q}|\leq |b-|Q|^{-\alpha/n}M_{\alpha,Q}(b)(y)|.$$
Thus applying Lemma 2.6 (3), we obtain
\begin{align*}
\frac{1}{|Q|}\int_{Q}|b(y)-b_{Q}|dy&=\frac{1}{|Q|}\int_{E\cup F}|b(y)-b_{Q}|dy\\
&=\frac{2}{|Q|}\int_{E}|b(y)-b_{Q}|dy\\
&\leq \frac{2}{|Q|}\int_{E}|b(y)-|Q|^{-\alpha/n}M_{\alpha,Q}(b)(y)|dy\\
&\leq \frac{2}{|Q|}\int_{Q}|b(y)-|Q|^{-\alpha/n}M_{\alpha,Q}(b)(y)|dy.
\end{align*}
By using Lemmas 2.1,~2.3 and the condition, we get
\begin{align*}
\frac{1}{|Q|}\int_{Q}|b(y)-b_{Q}|dy&\leq 2\left(\frac{1}{|Q|}\int_{Q}|b(y)-|Q|^{-\alpha/n}M_{\alpha,Q}(b)(y)|^{r}dy\right)^{1/r}\left(\frac{1}{|Q|}\int_{Q}|\chi_{Q}|^{r'}dy\right)^{1/r'}\\
&\leq \frac{C}{|Q|}\|(b-|Q|^{-\alpha/n}M_{\alpha,Q}(b))\chi_{Q}\|_{(E_{r}^{s})_{t}(\mathbb{R}^{n})}\|\chi_{Q}\|_{(E_{r'}^{s'})_{t}(\mathbb{R}^{n})}\\
& \leq  C.
\end{align*}
Which implies $b\in \BMO(\mathbb{R}^{n}).$ Thus we finish the proof of Lemma 3.1.
\end{proof}
\begin{refproof}[Proof of \cref{thm.1-0}]
Since $(T.4)\Rightarrow (T.1)$ follow readily from Lemma 2.7, we only need to prove $(T.1)\Rightarrow (T.2),~(T.2)\Rightarrow (T.3)$ and $(T.3)\Rightarrow (T.4).$

$(T.1)\Rightarrow (T.2):$ Let $b\in \mathrm{BMO}(\mathbb{R}^{n})$ and $b^{-}\in L^{\infty}(\mathbb{R}^{n}).$~We need to prove $[b,M_{\alpha}]$ is bounded from $(E_{p}^{q})_{t}(\mathbb{R}^{n})~\mathrm{to}~(E_{r}^{s})_{t}(\mathbb{R}^{n}).$ It follows from Lemma 2.4 that $M_{\alpha}f(x)<\infty(\mathrm{when} f\in(E_{p}^{q})_{t}(\mathbb{R}^{n}))$ for almost $x\in \mathbb{R}^{n},$ we have (for example see \cite{zhang2014commutators1})
\begin{align}
\left|[b,M_{\alpha}]f(x)\right|\leq M_{\alpha,b}(f)(x)+2b^{-}M_{\alpha}(f)(x).
\end{align}
Using Lemma 2.5, we further obtain
\begin{align*}
\left|[b,M_{\alpha}]f(x)\right|\leq  C\|b\|_{\ast}(M(M_{\alpha}f)(x)+M_{\alpha}(Mf)(x))+2b^{-}M_{\alpha}(f)(x).
\end{align*}
With the help of Lemma 2.2 and Lemma 2.4, we get
\begin{align*}
\|[b,M_{\alpha}]f\|_{(E_{r}^{s})_{t}(\mathbb{R}^{n})}&\leq C\|b\|_{\ast}\|M(M_{\alpha}f)+M_{\alpha}(Mf))\|_{(E_{r}^{s})_{t}(\mathbb{R}^{n})}+2\|b^{-}\|_{L^{\infty}(\mathbb{R}^{n})}\|M_{\alpha}(f)\|_{(E_{r}^{s})_{t}(\mathbb{R}^{n})}\\
&\leq \|M_{\alpha}f\|_{(E_{r}^{s})_{t}(\mathbb{R}^{n})}+\|Mf\|_{(E_{p}^{q})_{t}(\mathbb{R}^{n})}\\
&\leq C\|f\|_{(E_{p}^{q})_{t}(\mathbb{R}^{n})}.
\end{align*}
Thus, we give that $[b,M_{\alpha}]$ is bounded from $(E_{p}^{q})_{t}(\mathbb{R}^{n})~\mathrm{to}~(E_{r}^{s})_{t}(\mathbb{R}^{n}).$

$(T.2)\Rightarrow (T.3):$ For any fixed cube $Q\subset \mathbb{R}^{n}$ and $y\in Q,$~it follows from Lemma 2.6 (1) that
$$M_{\alpha}(b\chi_{Q})(y)=M_{\alpha,Q}(b)(y)
~~\mathrm{and}~~M_{\alpha}(\chi_{Q})(y)=M_{\alpha,Q}(\chi_{Q} )(y)=|Q|^{\frac{\alpha}{n}}.$$
Then for any $y\in Q,$~we have
\begin{align*}
b(y)-|Q|^{-\alpha/n}M_{\alpha,Q}(b)(y)&=|Q|^{-\alpha/n}(b(y)|Q|^{\alpha/n}-M_{\alpha,Q}(\chi_{Q} )(y))\\
&=|Q|^{-\alpha/n}(b(y)M_{\alpha}(\chi_{Q})(y)-M_{\alpha}(b\chi_{Q})(y))\\
&=|Q|^{-\alpha/n}[b,M_{\alpha}](\chi_{Q})(y).
\end{align*}
Then for any  $y\in \mathbb{R}^{n},$ we obtain
\begin{align*}
b(y)-|Q|^{-\alpha/n}M_{\alpha,Q}(b)(y)\chi_{Q}(y)=|Q|^{-\alpha/n}[b,M_{\alpha}](\chi_{Q})(y).
\end{align*}
By applying Lemma 2.3,~statement (T.2) and the condition $\alpha/n=1/q-1/s,$ we get
\begin{align*}
\left\|(b-|Q|^{-\alpha/n}M_{\alpha,Q}(b))\chi_{Q}\right\|_{(E_{r}^{s})_{t}(\mathbb{R}^{n})}&=|Q|^{-\alpha/n}\left\|[b,M_{\alpha}](\chi_{Q}) \right\|_{(E_{r}^{s})_{t}(\mathbb{R}^{n})}\\
&\leq C|Q|^{-\alpha/n}\left\|\chi_{Q} \right\|_{(E_{p}^{q})_{t}(\mathbb{R}^{n})}\\
&\leq C|Q|^{-\alpha/n+1/q}\\
&=C|Q|^{1/s},
\end{align*}
where the arbitrary constant $C$ is independent of $Q.$ Thus we deduce (T.3).

$(T.3)\Rightarrow (T.4):$  For any fixed cube $Q\subset \mathbb{R}^{n},$~we have
\begin{align*}
\frac{1}{|Q|}\int_{Q}|b(y)-M_{Q}(b)(y)|dy&\leq \frac{1}{|Q|}\int_{Q}|b(y)-|Q|^{-\alpha/n}M_{\alpha,Q}(b)(y)|dy\\
&~~+\frac{1}{|Q|}\int_{Q}||Q|^{-\alpha/n}M_{\alpha,Q}(b)(y)-M_{Q}(b)(y)|dy\\
&=I_{1}+I_{2}.
\end{align*}
For the first term $I_{1},$~by using Lemma 2.3, statement (T.3) and Lemma 2.1, we obtain
\begin{align*}
I_{1}&\leq \left(\frac{1}{|Q|}\int_{Q}|b(y)-|Q|^{-\alpha/n}M_{\alpha,Q}(b)(y)|^{r}dy\right)^{1/r}\left(\frac{1}{|Q|}\int_{Q}|\chi_{Q}|^{r'}dy\right)^{1/r'}\\
&\leq \frac{C}{|Q|}\|(b-|Q|^{-\alpha/n}M_{\alpha,Q}(b))\chi_{Q}\|_{(E_{r}^{s})_{t}(\mathbb{R}^{n})}\|\chi_{Q}\|_{(E_{r'}^{s'})_{t}(\mathbb{R}^{n})}\\
& \leq  C.
\end{align*}
Next, for all $y\in Q,$ it follows from Lemma 2.6 (1) that
$$M_{\alpha}(b\chi_{Q})(y)=M_{\alpha,Q}(b)(y)
~~\mathrm{and}~~ M_{\alpha}(\chi_{Q})(y)=|Q|^{\frac{\alpha}{n}},$$
and
$$M(b\chi_{Q})(y)=M_{Q}(b)(y)
~~\mathrm{and}~~ M(\chi_{Q})(y)=1.$$
Then for all $y\in Q,$
 \begin{align*}
&\left||Q|^{-\alpha/n}M_{\alpha,Q}(b)(y)-M_{Q}(b)(y)\right|\\
&\leq |Q|^{-\alpha/n}\left|M_{\alpha,Q}(b)(y)-|Q|^{\alpha/n}|b(y)|\right|+\left||b(y)|-M_{Q}(b)(y)\right|\\
&\leq |Q|^{-\alpha/n}\left|M_{\alpha}(b\chi_{Q})(y)-M_{\alpha}(\chi_{Q})(y)|b(y)|\right|+\left||b(y)|M(\chi_{Q})(y)-M(b\chi_{Q})(y)\right|\\
&\leq |Q|^{-\alpha/n}\left|[|b|,M_{\alpha}](\chi_{Q})(y)\right|+\left|[|b|,M](\chi_{Q})(y)\right|.
\end{align*}
The statement (T.3) along with Lemma 3.1 gives $b\in \BMO(\mathbb{R}^{n}),$ which implies $|b|\in \BMO(\mathbb{R}^{n}).$
It follows from (3.1), Lemma 2.5 and Lemma 2.6 (1) that
\begin{align*}
\left|[|b|,M_{\alpha}](\chi_{Q})(y)\right|&\leq M_{\alpha,b}(\chi_{Q})(y)\leq C\|b\|_{\ast}(M(M_{\alpha}(\chi_{Q}))(y)+M_{\alpha}(M(\chi_{Q}))(y))\\
&\leq C\|b\|_{\ast}|Q|^{\alpha/n}.
\end{align*}
And
\begin{align*}
\left|[|b|,M](\chi_{Q})(y)\right|&\leq M_{|b|}(\chi_{Q})(y)\leq C\|b\|_{\ast}M(M_{\alpha}(\chi_{Q}))(y)\leq C\|b\|_{\ast}.
\end{align*}
Thus we obtain
\begin{align*}
I_{2}\leq \frac{1}{|Q|}\int_{Q}|Q|^{-\alpha/n}\left|[|b|,M_{\alpha}](\chi_{Q})(y)\right|+\left|[|b|,M](\chi_{Q})(y)\right|dy\leq  C\|b\|_{\ast}.
\end{align*}
Combining with $I_{1}$ and $I_{2},$~we have
\begin{align*}
\sup_{Q}\frac{1}{|Q|}\int_{Q}|b(y)-M_{Q}(b)(y)|dy\leq C.
\end{align*}
Therefore we finish the proof of Theorem 1.1.
\end{refproof}

\begin{refproof}[Proof of \cref{thm.2-0}]
Since the statements $(T.1)\Longleftrightarrow (T.4)$ directly are obtained by Definition 2.2, we only need to prove $(T.1)\Longrightarrow(T.2)$, $(T.2)\Longrightarrow (T.3)$ and $(T.3)\Longrightarrow (T.4).$

$(T.1)\Longrightarrow (T.2).$ Since $b\in \BMO(\mathbb{R}^{n}),$ by using Lemma 2.5, we obtain
\begin{align*}
 M_{\alpha,b}(f)(x)\leq C\|b\|_{\ast}(M(M_{\alpha}f)(x)+M_{\alpha}(Mf)(x)).
\end{align*}
Similar to $(T.1)\Longrightarrow (T.2)$ of Theorem 1.1. It follows from Lemma 2.2 and Lemma 2.4 that  $M_{\alpha,b}$ is bounded from $(E_{p}^{q})_{t}(\mathbb{R}^{n})~\mathrm{to}~(E_{r}^{s})_{t}(\mathbb{R}^{n}).$

$(T.2)\Longrightarrow (T.3).$ For any fixed cube $Q\subset \mathbb{R}^{n}$ and $y\in Q,$ we have
\begin{align*}
|b(y)-b_{Q}|&=\frac{1}{|Q|}\int_{Q}|b(y)-b(z)|dz\\
&=\frac{1}{|Q|}\int_{Q}|b(y)-b(z)|\chi_{Q}(z)dz\\
&=\frac{1}{|Q|^{\alpha/n}}M_{\alpha,b}(\chi_{Q})(y).
\end{align*}
Then for any  $y\in Q,$ we obtain
\begin{align*}
|(b(y)-b_{Q})\chi_{Q}(y)|=|Q|^{-\alpha/n}M_{\alpha,b}(\chi_{Q})(y).
\end{align*}
By applying Lemma 2.3,~statement (T.2) and the condition $\alpha/n=1/q-1/s,$ we get
\begin{align*}
\left\|(b-b_{Q})\chi_{Q}\right\|_{(E_{r}^{s})_{t}(\mathbb{R}^{n})}&=|Q|^{-\alpha/n}\left\|M_{\alpha,b}(\chi_{Q}) \right\|_{(E_{r}^{s})_{t}(\mathbb{R}^{n})}\\
&\leq C|Q|^{-\alpha/n}\left\|\chi_{Q} \right\|_{(E_{p}^{q})_{t}(\mathbb{R}^{n})}\\
&\leq C|Q|^{-\alpha/n+1/q}=C|Q|^{1/s},
\end{align*}
where the arbitrary constant $C$ is independent of $Q.$ Thus we deduce (T.3).

$(T.3)\Longrightarrow (T.4).$ For any fixed cube $Q\subset \mathbb{R}^{n},$~using Lemmas 2.1, 2.3 and statement (T.3) again, we have
\begin{align*}
\frac{1}{|Q|}\int_{Q}|b(y)-b_{Q}|dy&\leq 2\left(\frac{1}{|Q|}\int_{Q}|b(y)-b_{Q}|^{r}dy\right)^{1/r}\left(\frac{1}{|Q|}\int_{Q}|\chi_{Q}|^{r'}dy\right)^{1/r'}\\
&\leq \frac{C}{|Q|}\|(b-b_{Q})\chi_{Q}\|_{(E_{r}^{s})_{t}(\mathbb{R}^{n})}\|\chi_{Q}\|_{(E_{r'}^{s'})_{t}(\mathbb{R}^{n})} \leq  C.
\end{align*}

Thus the proof of Theorem 1.2 is completed.
\end{refproof}

\begin{refproof}[Proof of \cref{thm.3-0}]
Since the implications $(T.1)\Longleftrightarrow(T.4)$ follow readily from Lemma 2.7, we only need to prove $(T.1)\Longrightarrow(T.2), (T.2)\Longrightarrow(T.3),(T.3)\Longrightarrow(T.4).$

$(T.1)\Longrightarrow(T.2).$ Assume $b\in \BMO(\mathbb{R}^{n})$ and $b\in L^{\infty}(\mathbb{R}^{n}),$ for any fixed cube $Q\subset \mathbb{R}^{n},$ the following estimate was obtained in \cite{zhang2014commutators}:
\begin{align*}
|[|b|,M^{\sharp}]f(x)|\leq 2M_{|b|}f(x).
\end{align*}
Since~$M^{\sharp}(f)\leq 2M(f)$, for any~$x\in\mathbb{R}^{n},$ we have
\begin{align*}
|[b,M^{\sharp}](f)(x)|&\leq 2((b^{-}(x))M^{\sharp}(f)(x)+M^{\sharp}_{p}(b^{-}f)(x))+|[|b|,M^{\sharp}](f)(x)|\\
&\leq 4((b^{-}(x))M(f)(x)+M(b^{-}f)(x))+2M_{|b|}f(x).
\end{align*}
 Since $b\in \BMO(\mathbb{R}^{n}),$ then $|b|\in \BMO(\mathbb{R}^{n}).$ It follows from Corollary 1.2 and Lemma 2.2 that
\begin{align*}
\|[b,M^{\sharp}](f)\|_{(E_{p}^{q})_{t}(\mathbb{R}^{n})}\leq C\|b\|_{\ast}\|f\|_{(E_{p}^{q})_{t}(\mathbb{R}^{n})}.
\end{align*}
Which implies that $[b,M^{\sharp}]$ is bounded on $(E_{p}^{q})_{t}(\mathbb{R}^{n}).$

$(T.2)\Longrightarrow(T.3).$ Assume $[b,M^{\sharp}]$ is bounded on $(E_{p}^{q})_{t}(\mathbb{R}^{n}).$ For any fixed cube $Q,$ we have (see also \cite{zhang2014commutators})
$$M^{\sharp}(\chi_{Q})(y)=\frac{1}{2},~~~y\in Q.$$
Then, for all $y\in Q,$ we obtain
\begin{align*}
&b(y)-2M^{\sharp}(b\chi_{Q})(y)=2(\frac{1}{2}b(y)-M^{\sharp}(b\chi_{Q})(y))\\
&=2(b(y)M^{\sharp}(\chi_{Q})(y)-M^{\sharp}(b\chi_{Q})(y))=2[b,M^{\sharp}](\chi_{Q})(y).
\end{align*}
Then for any  $y\in \mathbb{R}^{n},$ we obtain
\begin{align*}
&(b(y)-2M^{\sharp}(b\chi_{Q})(y))\chi_{Q}=2[b,M^{\sharp}](\chi_{Q})(y).
\end{align*}
Since $[b,M^{\sharp}]$ is bounded on $(E_{p}^{q})_{t}(\mathbb{R}^{n}),$ then by applying Lemma 2.3, we get
\begin{align*}
\left\|(b-2M^{\sharp}(b\chi_{Q}))\chi_{Q}\right\|^{q}_{(E_{p}^{q})_{t}(\mathbb{R}^{n})}&= 2^{q}\left\|[b,M^{\sharp}](\chi_{Q})\right\|^{q}_{(E_{p}^{q})_{t}(\mathbb{R}^{n})}\\
&\leq C\left\|\chi_{Q}\right\|^{q}_{(E_{p}^{q})_{t}(\mathbb{R}^{n})}\\
&\leq C|Q|.
\end{align*}
 Then we obtain (T.3).

$(T.3)\Longrightarrow (T.4).$ For any fixed cube $Q\subset \mathbb{R}^{n},$~using Lemmas 2.1, 2.3 and statement (T.3) again, we have
\begin{align*}
\frac{1}{|Q|}\int_{Q}|b(y)-2M^{\sharp}(b\chi_{Q})(y)|dy&\leq \left(\frac{1}{|Q|}\int_{Q}|b(y)-2M^{\sharp}(b\chi_{Q})(y)|^{p}dy\right)^{1/p}\left(\frac{1}{|Q|}\int_{Q}|\chi_{Q}|^{p'}dy\right)^{1/p'}\\
&\leq \frac{C}{|Q|}\|b-2M^{\sharp}(b\chi_{Q})\|_{(E_{p}^{q})_{t}(\mathbb{R}^{n})}\|\chi_{Q}\|_{(E_{p'}^{q'})_{t}(\mathbb{R}^{n})}\\
&\leq  C.
\end{align*}
which implies statement (T.4)  since the constant $C$ is independent of $Q.$
Therefore, we finish the proof of Theorem 1.3.
\end{refproof}

\section*{Funding information}

\noindent This work was partly supported by the Fundamental Research Funds for Education Department of Heilongjiang Province (No. 1454YB020), the Fundamental Research Funds for Education Department of Heilongjiang Province (No.2019-KYYWF-0909, 1453ZD031, SJGY20220609) the Reform and Development Foundation for Local Colleges and Universities of the Central Government (No. 2020YQ07).
\section*{Conflict of interest}

\noindent The authors state that there is no conflict of interest.
\section*{Date availability statement}

\noindent All data generated or analysed during this study are included in this published article.
\section*{Author contributions}

\noindent All authors contributed equally to the writing of this article. All authors read the final manuscript and approved its submission.

\bigskip


\begin{thebibliography}{10}

\bibitem{coifman1976factorization}
R~Coifman, R~Rochberg, and G~Weiss.
\newblock {Factorization theorems for Hardy spaces in several variables}.
\newblock {\em Annals of Mathematics}, 103(3):611--635, 1976.

\bibitem{janson1978mean}
S~Janson.
\newblock Mean oscillation and commutators of singular integral operators.
\newblock {\em Arkiv f{\"o}r Matematik}, 16(1-2):263--270, 1978.

\bibitem{wu2023some}
J~Wu and W~Zhao.
\newblock Some estimates for commutators of the fractional maximal function on
  stratified lie groups.
\newblock {\em Journal of Inequalities and Applications}, 2023(1):123, 2023.

\bibitem{wu2024characterization}
J~Wu and Y~Chang.
\newblock {Characterization of Lipschitz spaces via commutators of fractional
  maximal function on the $ p $-Adic variable exponent Lebesgue spaces}.
\newblock {\em Comptes Rendus. Math{\'e}matique}, 362(1):177--194, 2024.

\bibitem{wu2024some}
J~Wu and Y~Chang.
\newblock {Some estimates for commutators of sharp maximal function on the
  p-adic Lebesgue spaces}.
\newblock {\em Open Mathematics}, 21(1):20230168, 2023.

\bibitem{tao2000electro}
R~Tao.
\newblock {\em Electro-rheological fluids and magneto-rheological
  suspensions-proceedings of the 7th International conference}.
\newblock World Scientific, 2000.

\bibitem{nolder1991hardy}
C~Nolder.
\newblock {Hardy-Littlewood theorems for solutions of elliptic equations in
  divergence form}.
\newblock {\em Indiana University Mathematics Journal}, 40(1):149--160, 1991.

\bibitem{fefferman1972h}
C~Fefferman and E~Stein.
\newblock {$H^{p}$ spaces of several variables}.
\newblock {\em Acta Mathematica}, 129:137--193, 1972.

\bibitem{bastero2000commutators}
J~Bastero, M~Milman, and F~Ruiz.
\newblock Commutators for the maximal and sharp functions.
\newblock {\em Proceedings of the American Mathematical Society},
  128(11):3329--3334, 2000.

\bibitem{zhang2009commutators}
P~Zhang and J~Wu.
\newblock Commutators of the fractional maximal functions.
\newblock {\em Acta Mathematica Sinica, Chinese Series}, 52(6):1235--1238,
  2009.

\bibitem{zhang2014commutators}
P~Zhang and J~Wu.
\newblock {Commutators for the maximal functions on Lebesgue spaces with
  variable exponent}.
\newblock {\em Journal of mathematical analysis and applications},
  17(4):1375--1386, 2014.

\bibitem{zhang2014commutators1}
P~Zhang and J~Wu.
\newblock {Commutators of the fractional maximal function on variable exponent
  Lebesgue spaces}.
\newblock {\em Czechoslovak Mathematical Journal}, 64:183--197, 2014.

\bibitem{guliyev2022some}
V~Guliyev.
\newblock {Some characterizations of BMO spaces via commutators in Orlicz
  spaces on stratified Lie groups}.
\newblock {\em Results in Mathematics}, 77:1--18, 2022.

\bibitem{auscher2019representation}
P~Auscher and M~Mourgoglou.
\newblock Representation and uniqueness for boundary value elliptic problems
  via first order systems.
\newblock {\em Revista matem{\'a}tica iberoamericana}, 35(1):241--315, 2019.

\bibitem{auscher2017tent}
P~Auscher and C~Prisuelos-Arribas.
\newblock Tent space boundedness via extrapolation.
\newblock {\em Mathematische Zeitschrift}, 286(3):1575--1604, 2017.

\bibitem{lu2022necessary}
Y~Lu, J~Zhou, and S~Wang.
\newblock Necessary and sufficient conditions for boundedness of commutators
  associated with calder{\'o}n--zygmund operators on slice spaces.
\newblock {\em Annals of Functional Analysis}, 13(4):1--61, 2022.

\bibitem{heng2023some}
H~Yang and J~Zhou.
\newblock {Some Characterizations of Lipschitz spaces via commutators of the
  Hardy-Littlewood maximal operator on slice space}.
\newblock {\em Proceedings of the Romanian Academy, Series A: Mathematics,
  Physics, Technical Sciences, Information Science}, 24(3):223--230, 2023.

\bibitem{john1961functions}
F~John and L~Nirenberg.
\newblock On functions of bounded mean oscillation.
\newblock {\em Communications on pure and applied Mathematics}, 14(3):415--426,
  1961.

\bibitem{lu2022some}
Y~Lu, S~Wang, and J~Zhou.
\newblock {Some estimates of multilinear operators on weighted amalgam spaces
  ($L^{p}$, $L_{w}^{q})_t (R^{n})$}.
\newblock {\em Acta Mathematica Hungarica}, 168(1):113--143, 2022.

\bibitem{guliyev2021commutators}
V~Guliyev.
\newblock {Commutators of the fractional maximal function in generalized Morrey
  spaces on Carnot groups}.
\newblock {\em Complex Variables and Elliptic Equations}, 66(6-7):893--909,
  2021.

\end{thebibliography}
\end{document}